\newtheorem{teo}{Theorem}[section]
\newtheorem{lema}{Lemma}[section]
\newtheorem{pro}{Proposition}[section]
\newtheorem{rmk}{Remark}[section]\newtheorem*{rmk*}{Remark}
\newtheorem{coro}{Corollary}[section]
\theoremstyle{remark}     
\newtheorem{ex}[rmk]{Example}\newtheorem*{ex*}{Example}
\newtheorem*{exs*}{Examples}
\newtheorem*{acknowledgements}{Acknowledgements}
\def\sideremark#1{\ifvmode\leavevmode\fi\vadjust{\vbox to0pt{\vss
\hbox to 0pt{\hskip\hsize\hskip1em%
\vbox{\hsize2cm\tiny\raggedright\pretolerance10000%
\noindent #1\hfill}\hss}\vbox to8pt{\vfil}\vss}}}%
\theoremstyle{plain}      
\newtheorem{lemma}[rmk]{Lemma}
\newtheorem{theorem}[rmk]{Theorem}
\theoremstyle{definition} 
\newcommand{\bt}{\begin{theorem}}\newcommand{\et}{\end{theorem}}
\newcommand{\bl}{\begin{lemma}}\newcommand{\el}{\end{lemma}}
\newcommand{\bp}{\begin{proof}}\newcommand{\ep}{\end{proof}}
\newcommand{\be}{\begin{equation}}\newcommand{\ee}{\end{equation}}
\newcommand{\bdm}{\begin{displaymath}}
\newcommand{\edm}{\end{displaymath}}
\numberwithin{equation}{section}
\def \tnabla{\widetilde{\nabla}}
\def \tiR{\widetilde{R}}
\def \z{\zeta}
\newcommand{\iso}{\cong}
\newcommand{\lan}{\langle}
\newcommand{\ran}{\rangle}
\newcommand{\hook}{\lrcorner\,}
\newcommand{\D}{\curly{D}}
\renewcommand{\o}{\omega}
\renewcommand{\leq}{\leqslant}
\newcommand{\R}{\mathbb{R}}
\newcommand{\C}{\mathbb{C}}
\newcommand{\la}{\lambda}\newcommand{\La}{\Lambda}
\newcommand{\w}{\wedge}
\renewcommand{\ln}{\textsl{ln}\,}
\newcommand{\lie}[1]{\mathfrak{#1}}
\newcommand{\Lie}[1]{\textsl{#1}}
\DeclareMathOperator{\SO}{\Lie{SO}}
\DeclareMathOperator{\SL}{\Lie{SL}}
\DeclareMathOperator{\U}{\Lie{U}}
\DeclareMathOperator{\so}{\lie{so}}
\renewcommand{\Re}{\textsl{Re}}
\renewcommand{\Im}{\textsl{Im}}
\DeclareMathOperator{\Ric}{\textsl{Ric}\,}
\DeclareMathOperator{\s}{\textsl{s}}
\renewcommand{\span}{\textsl{span}}
\newcommand{\grad}{\textsl{grad\,}}
\newcommand{\curly}{\mathscr}
\newcommand{\q}{\quad}\newcommand{\qq}{\qquad}
\newcommand{\bb}{\mathbb}
\newcommand{\ba}{\begin{array}}\newcommand{\ea}{\end{array}}
\renewcommand{\&}{{\footnotesize \&}}
\begin{document}

\title[]{Systems of symplectic forms \\ on four-manifolds}
\keywords{symplectic $5$-frame, holonomy of the canonical Hermitian connection, almost K\"ahler}
\thanks{Partially supported by {\sc Gnsaga} of {\sc In}d{\sc am}, {\sc Prin} \oldstylenums{2007} of {\sc Miur} (Italy), and the Royal Society of New 
Zealand, Marsden grant no. 06-UOA-029}
\date{\today}
\author[S.G.Chiossi]{Simon G. Chiossi} 
\address[SGC]{Dipartimento di matematica, Politecnico di Torino, c.so 
Duca degli Abruzzi 24, 10129 Torino, Italy}
\email{simon.chiossi@polito.it}

\author[P.-A.Nagy]{Paul-Andi Nagy} 
\address[PAN]{Institut f\"ur Mathematik und Informatik, Ernst-Moritz-Arndt Universit\"at Greifs\-wald, Walther Rathenau Str. 47,
17487 Greifswald, Germany}
\email{nagyp@uni-greifswald.de}
\frenchspacing

\begin{abstract}
We study almost Hermitian $4$-manifolds with holonomy algebra, for the canonical Hermitian connection, of dimension at most one. We show how Riemannian $4$-manifolds admitting five orthonormal 
symplectic forms fit therein and classify them. In this set-up we also fully describe almost K\"ahler $4$-manifolds.
\end{abstract}

\maketitle
\vspace{-10mm}
\tableofcontents
\vspace{-10mm}
\section{Introduction}

\noindent 
The existence of orthogonal harmonic forms on an oriented Riemannian four-manifold 
$(M^4,g)$ typically encodes relevant properties of the metric. An orthonormal frame of closed $1$-forms, for instance, will flatten $g$.

Also closed, orthonormal $2$-forms impose severe constraints on $(M^{4}, g)$, essentially according to the choices of orientation available. Many cases have been addressed in the literature: couples or triples of this kind, defining the same orientation, were studied in \cite{Salamon:special, Geiges:symplectic-couples, Geiges-G:triples}, whereas \cite{Bande-K:symplectic-pairs} dealt with pairs of oppositely-oriented symplectic forms. 

The aim of this note is to consider smooth Riemannian manifolds $(M^4, g)$ admitting a system of \emph{five}  symplectic forms $\{\omega_k, 1 \leq k \leq 5\}$ such that 
\be
\label{eq:5ff-1}
g(\omega_i, \omega_j)=\delta_{ij} \qq i,j=1,\ldots, 5.
\ee
As we will see in section \ref{sec:2forms}, equation \eqref{eq:5ff-1} is equivalent to considering, on a $4$-manifold $M$, 
a so-called \emph{$5$-frame}, that is  five non-degenerate $2$-forms  satisfying
\be
\label{eq:5ff}
\omega_i \wedge \omega_j=\pm\delta_{ij}\,\o_{5}\wedge \o_{5} \qq i,j=1,\ldots, 4
\ee
at each point of $M$. A \emph{closed $5$-frame} is a $5$-frame of symplectic forms. It is known that if $\omega_1,\ldots,\o_6$ is an orthonormal frame of closed $2$-forms then the metric $g$ must be flat, a case we will exclude a priori.

It is easy to see that, up to a re-ordering, three of the $2$-forms are anti-selfdual, and furnish a hyperK\"ahler (hence Ricci-flat) metric $g$. The $2$-forms remaining in the frame are selfdual and give a holomorphic-symplectic form: this in turn yields a 
complex structure $I$, for which $g$ is necessarily Hermitian. 

Our first result is
\begin{teo} \label{story1ends}
Let $(M, g)$ be a non-flat Riemannian $4$-manifold equipped with 
a closed $5$-frame. Then 
\begin{itemize}
\item[(i)] there exists a tri-holomorphic Killing vector field for the hyperK\"ahler structure;
\item[(ii)] $(M, g)$ is  locally isometric to  $\R^{+}\times \textsl{Nil}^{\,3}$ 
equipped with metric 
$$
dt^{2}+ (\tfrac 23 t)^{3/2}(\sigma_{1}^{2}+\sigma_{2}^{2})+ (\tfrac 23 t)^{-2/3}
\sigma_{3}^2,
$$
 where $\{\sigma_{i}\}$ is a basis of left-invariant one-forms on the Heisenberg group $\textsl{Nil}^{\,3}$ satisfying $d\sigma_{1}=d\sigma_{2}=0, d\sigma_{3}=\sigma_{1}\wedge \sigma_{2}$.
\end{itemize}
Moreover the $5$-frame is unique, up a constant rotation in $O(3) \times U(1)$.
\end{teo}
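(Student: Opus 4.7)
The plan is to unpack the structure forced by the $5$-frame and then invoke the classification of almost Hermitian $4$-manifolds with canonical Hermitian connection of holonomy algebra of dimension at most one, developed earlier in the paper. Orthonormality combined with $\Lambda^2=\Lambda^2_+\oplus\Lambda^2_-$ forces, up to re-ordering and an orientation choice, three forms (say $\omega_1,\omega_2,\omega_3$) to form a basis of $\Lambda^2_-$ and $\omega_4,\omega_5$ to lie in $\Lambda^2_+$. Being closed and spanning $\Lambda^2_-$, the first triple endows $(M,g,J_1,J_2,J_3)$ with a hyperK\"ahler structure of $\nabla$-parallel forms, so $g$ is Ricci-flat. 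The pair $\omega_4,\omega_5$ combines into $\Phi:=\omega_4+i\omega_5$, a closed $(2,0)$-form of constant norm for a unique $g$-Hermitian integrable complex structure $I$ anticommuting with each $J_\alpha$. The third unit self-dual form $\omega_I$, the fundamental form of $I$, cannot be closed, otherwise we would have six closed orthonormal $2$-forms and $g$ would be flat.

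Next I would show that $(M,g,I)$ realises the framework of the paper: $\Phi$ is a holomorphic section of constant norm of the Hermitian line bundle $\Lambda^{2,0}_I$, hence parallel for the canonical Hermitian connection $\hat\nabla$ of $(g,I)$, so $\hat{\mathfrak h}\subseteq\mathfrak{su}(2)$. The closedness of $\omega_4,\omega_5$ together with the $\nabla$-parallel anti-self-dual triple then reduce $\hat{\mathfrak h}$ further to dimension at most one, which I would verify by a direct computation in a Hermitian frame adapted to the splitting $\Lambda^2_+=\langle\omega_4,\omega_5\rangle\oplus\langle\omega_I\rangle$. The classification results then furnish a Killing vector field $X$ preserving $I$. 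Using $\hat\nabla\Phi=0$ together with the $\nabla$-parallelism of the $\omega_\alpha$, one verifies $\mathcal L_X\omega_\alpha=0$ for $\alpha=1,2,3$, so $X$ is tri-holomorphic; this is claim (i).

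Claim (ii) then follows by a cohomogeneity-one reduction. Tri-holomorphic Killing vectors on hyperK\"ahler $4$-manifolds admit the Heisenberg variant of the Gibbons--Hawking description; the closedness of $\omega_4,\omega_5$ coupled with the non-closedness of $\omega_I$ pin down the orbits of $X$ to be copies of $\textsl{Nil}^{\,3}$ with $X$ generating its centre. Writing $g=dt^2+f(t)^2(\sigma_1^2+\sigma_2^2)+h(t)^2\sigma_3^2$ in adapted coordinates and imposing the hyperK\"ahler and orthonormality constraints reduces matters to an ODE system whose unique solution, up to translation of $t$, is the stated metric. For uniqueness, unit $\nabla$-parallel anti-self-dual $2$-forms parametrise an $O(3)$-family of hyperK\"ahler triples; on the self-dual side $\omega_I$ is canonically singled out as the unique unit direction along which $\hat{\mathfrak h}$ acts non-trivially, and closed orthonormal pairs in the orthogonal $2$-plane within $\Lambda^2_+$ account for the remaining $U(1)$-factor.

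The main obstacle will be proving that $X$ preserves the full hyperK\"ahler triple, not merely $(g,I)$: this requires tracking carefully how the canonical Hermitian connection $\hat\nabla$, whose non-trivial one-dimensional piece lies in the self-dual sector, interacts with the Levi-Civita connection $\nabla$, whose non-trivial piece lies in $\Lambda^2_-$, so that one and the same vector field simultaneously controls both sectors.
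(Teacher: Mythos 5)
Your outline follows the same road as the paper's actual proof: split the frame into an anti-selfdual hyperK\"ahler triple and a selfdual holomorphic-symplectic pair for an integrable $I$, produce a Killing field preserving $I$, show it is tri-holomorphic, and then reduce to a Gibbons--Hawking description with Nil$^{3}$ orbits. However, the two steps carrying the real content are not supplied, and the mechanism you propose for the first of them is incorrect. The form $\omega_4+i\omega_5$ is \emph{not} parallel for the canonical Hermitian connection $\tnabla$: in a gauge with $\omega_4=\omega_{I_1}$, $\omega_5=\omega_{I_2}$, closedness gives $\tnabla I_2=\tfrac12 I\theta\otimes I_1$ (this is exactly how proposition \ref{prop42} extracts $\widetilde{\gamma}_1=-\tfrac12 d(I\theta)$); the pair is parallel only for the modified connection $D=\tnabla-\tfrac14 I\theta\otimes I$. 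Moreover $\widetilde{\mathfrak{hol}}\subseteq\mathfrak{su}(2)$ cannot hold in the non-flat case: Ricci-flatness and selfduality already force $\tiR=\tfrac12\,\widetilde{\gamma}_1\otimes\omega_I$ (proposition \ref{hol-vhk}), so the holonomy lies in the \emph{central} $\mathfrak{u}(1)=\langle\omega_I\rangle$ of $\mathfrak{u}(2)$, which meets $\mathfrak{su}(2)$ trivially; a $\tnabla$-parallel $(2,0)$-form would therefore make $\tnabla$ flat and, by theorem \ref{flat-gen}, $g$ flat.

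The tri-holomorphy of $X$, which you yourself flag as the main obstacle, is not a routine verification: for a general Ricci-flat selfdual Hermitian surface the Goldberg--Sachs Killing field $X=I\grad(\kappa^{-1/3})$ is \emph{not} tri-holomorphic (Ward's examples, cited in the paper). What makes it so here is precisely the curvature identity $\widetilde{\gamma}_1=-\tfrac12 d(I\theta)$ characterising closed $5$-frames: inserted into \eqref{29}, and combined with $\kappa\theta+\tfrac23 d\kappa=0$ and \eqref{ks}, it yields $d(\kappa^{-1/3}I\theta)\in\langle\omega_I\rangle$, i.e.\ $d^{-}X^{\flat}=0$, which is the tri-holomorphy; comparison with $d^{+}X^{\flat}=-\tfrac1{12}\kappa^{2/3}\omega_I$ then gives $\vert\theta\vert^2=-\kappa/3$, the identity needed to produce the negatively-oriented K\"ahler structure $J$ and invoke case b1) of Apostolov--Gauduchon's theorem, which is what actually pins down $U=ay+b$ and the stated metric. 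None of this computation appears in your proposal, and your alternative route to (ii) presupposes a cohomogeneity-one Bianchi II structure that is itself part of what must be proved. Finally, for uniqueness you must still show the residual $U(1)$ rotation between two closed orthonormal selfdual pairs is \emph{constant}: closedness makes the transition function $f:M\to U(1)$ holomorphic, hence constant; merely ``accounting for the $U(1)$-factor'' does not exclude a varying phase.
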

We are thus dealing with Hermitian, selfdual, Ricci-flat  surfaces, classified in \cite{Apostolov-G:sd-EH}. 
To prove theorem \ref{story1ends} we need to determine, first, which metrics in  \cite{Apostolov-G:sd-EH} admit a holomorphic-symplectic form, and then determine that form explicitly.  

The technique used for the proof is indicative of another point of view for looking at closed $5$-frames, namely that of holonomy. We prove that the curvature tensor $\tiR$ of the canonical Hermitian 
connection of $(g,I)$, cf. section \ref{sec:curvature}, is algebraically defined by the structure's Lee and K\"ahler forms (proposition \ref{hol-vhk}). This is used to show that the 
holomorphic Killing field $X$, coming from the Goldberg-Sachs theorem \cite{Nur,ag} for Einstein-Hermitian metrics, is actually tri-ho\-lo\-mor\-phic for the hyperK\"ahler structure. This proves part (i), and as a 
consequence, we know that $g$ is essentially described by the celebrated Gibbons-Hawking Ansatz. 

By changing the sign of $I$ on the distribution spanned by $X, IX$ we obtain a K\"ahler structure $(g,J)$ with `negative' orientation. This allows us to use  \cite[Thm 1]{Apostolov-G:sd-EH}, and eventually the metric $g$ 
is given as in (ii). Note this is a cohomogeneity-one Bianchi metric of type II (see \cite{Tod:Bianchi} for details).\\

The holonomy approach to close $5$-frames leaves much space (section \ref{sec:small-holo}) for investigating almost Hermitian $4$-manifolds $(M,g,I)$ for which the holonomy 
group of the canonical Hermitian connection is `small': since this has dimension bounded by four, small will mean of dimension zero or one. Then there are three possibilities for the two-form corresponding 
to the holonomy generator. It can vanish identically, in which case we prove $g$ must be flat (theorem \ref{flat-gen}), generalising earlier results \cite{Bal,Bal2}. It can be proportional to the K\"ahler form $\o_I$ of $(g,I)$, and this is 
precisely the set-up of selfdual Ricci-flat $4$-manifolds (proposition \ref{hol-vhk}). 
In the third case the holonomy generator has a component orthogonal to $\o_I$ that defines a K\"ahler structure reversing the orientation. In this situation almost K\"ahler structures 
are explicitly classified (theorem \ref{t-ak}). The corresponding examples are build deforming the product of $\bb{R}^2$ with a Riemann surface, in the spirit of \cite{Apostolov-AD:integrability}, \cite{Chiossi-Nagy}.

\section{Two-forms on $4$-manifolds} \label{sec:2forms}
\noindent
Consider a smooth oriented Riemannian $4$-manifold $(M,g)$. The Hodge star operator $\star$ acting on the bundle of two-forms $\Lambda^2$ 
is an involution, with the rank-three subbundles of selfdual and anti-selfdual  $2$-forms $\La^{\pm}=\ker (\star \mp \textsl{Id}_{\Lambda^2})$ as eigenspaces. The resulting decomposition 
\be
\label{eq:A/SDforms}
\La^2=\La^+\oplus \La^-
\ee
is loosely speaking the `adjoint' version of the fact that  $\so(4)=\so(3)\oplus \so(3)$ is semisimple.
 
Now let $I$ in $T^*M\otimes TM$ be an orthogonal almost complex structure, that is 
$$
I^2=-\textsl{Id}_{TM},\qq g(I \cdot,I \cdot)=g (\cdot, \cdot).$$ 
The K\"ahler form $\o_{I}=g(I \cdot, \cdot)$ is non-degenerate at any point of $M$, and induces, by decree, the positive orientation: $\o_{I}^2=\textsl{vol}(g)$. 
The almost complex structure $I$ extends to the exterior algebra by 
\begin{equation*}
(I \alpha)(X_1, \dots, X_p)=\alpha(I X_1, \dots, I X_p),
\end{equation*}
where $\alpha$ is a $p$-form on $M$ and $X_1, \dots, X_p$ belong to $TM$. We shall work with real-valued forms, unless specified otherwise. Notation-wise,  
$()^{\flat}:TM \to \Lambda^1M$ is the isomorphism induced by the metric, with inverse $()^{\sharp}$.

The bundle of real $2$-forms also decomposes under $\U(2)$ as 
\be\label{eq:inv-2forms}
\La^2=\la^{1,1} \oplus \la^2,   
\ee
where $\la^{1,1}$ denotes $I$-invariant two-forms, and  $ \la^2$ anti-invariant ones. If $\langle \omega_{I}\rangle$ is the real line through $\omega_{I}$ we further have $\la^{1,1}=\lan \o_{I}\ran \oplus\la^{1,1}_{0}$, where the 
space of primitive $(1,1)$-forms $\lambda^{1,1}_0$ is the orthogonal complement to $\langle \omega_{I}\rangle$ in $\lambda^{1,1}$. 
The real rank-two bundle $\lambda^2$ has a complex structure $\mathcal{I}\alpha=\alpha(I \cdot, \cdot)$ that makes it a complex line bundle isomorphic to the canonical bundle of $(M,I)$. 
So in presence of an almost Hermitian structure, $\Lambda^{+}$ splits as   
$$ 
\La^+=\lan \o_{I}\ran  \oplus  \la^2$$
under $U(1)$ and comparing \eqref{eq:inv-2forms} and \eqref{eq:A/SDforms} leads to $ \La^-=\la^{1,1}_{0}.$

Slightly changing the point of view, we briefly recall how to recover a conformal structure in dimension four from rank three subbundles of two-forms.
Consider a smooth oriented manifold $M$ of real dimension four with volume form $\textsl{vol}(M)$. The bundle $\La^2 M$ possesses a non-degenerate bilinear form $q$ given by 
$$
\alpha\w \beta=q(\alpha,\beta)\textsl{vol}(M)
$$
whenever $\alpha, \beta$ belong to $\Lambda^2M$. Any subbundle $E$ of $\Lambda^2M$ of rank three and maximal, in the sense that $q_{\vert E}$ is positive,  determines a unique conformal structure on $M$ such 
that $E=\Lambda^{+}M$ \cite{Salamon:instantons}. The proof of this descends from the fact that $q$ has signature $(3,3)$ and that 
at any point $x$ of $M$ the set of maximal subspaces of $\Lambda^2T_xM$ is parametrised by 
$$ \frac{GL(4,\R)}{CO(4)} \iso \frac{\SL(4,\C)}{\SO(4)} \iso \frac{\SO_0(3,3)}{\SO(3)\times\SO(3)}.
$$
Here $CO(4)=\R^+\times \SO(4)$ is the conformal group. In particular 
\begin{lemma} \label{lema1}
A closed $5$-frame on a smooth $4$-manifold determines
\begin{itemize}
\item[(i)] a Riemannian metric $g$ such that
\be
\label{eq:convention}
\span\{ \omega_1,\omega_2,\omega_3\}=\Lambda^{-}, \ \{\omega_4, \omega_5\} \subset \Lambda^{+},
\ee
up to re-ordering;
\item[(ii)] an orthogonal complex structure $I$ defined by $\o_5=\o_4(I \cdot, \cdot)$.
\end{itemize}
\end{lemma}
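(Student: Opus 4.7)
The plan is to work pointwise and invoke the principle recalled just above the statement: any rank-three subbundle $E\subset\La^2M$ on which the wedge-pairing $q$ is positive definite determines a unique conformal structure with $\La^+M=E$. My strategy is to extract such an $E$ from the $5$-frame (so that $\o_4$ and $\o_5$ automatically lie inside it), fix the metric representative in the conformal class by prescribing a volume, and then define $I$ pointwise from $\o_4,\o_5$ as an orthonormal pair inside $\La^+$.

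Fix a point of $M$, orient it so that $\mu:=\o_5\w\o_5$ is positive, and introduce $q$ by $\alpha\w\beta=q(\alpha,\beta)\mu$; this $q$ has signature $(3,3)$. Relations \rf{eq:5ff} translate into $q(\o_i,\o_j)=\epsilon_i\delta_{ij}$ for $i,j\le 4$ with $\epsilon_i\in\{\pm1\}$, together with $q(\o_5,\o_5)=+1$ and the cross-orthogonality $q(\o_5,\o_i)=0$ for $i\le 4$, the latter being \rf{eq:5ff} read symmetrically in the label $5$. A signature count rules out $(\epsilon_1,\dots,\epsilon_4)$ having all four entries of the same sign, so that after a permutation of the labels $1,\dots,4$ and, if needed, a reversal of orientation of $M$ (which sends $q\mapsto-q$), I would normalise to $\epsilon_1=\epsilon_2=\epsilon_3=-1$ and $\epsilon_4=+1$.

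With these signs in place, $N:=\span(\o_1,\o_2,\o_3)$ is $q$-negative definite, its $q$-orthogonal complement $E$ is a rank-three subbundle on which $q$ is positive definite, and $E$ contains both $\o_4$ and $\o_5$. The quoted principle delivers a conformal class $[g]$ with $\La^+=E$ and $\La^-=N$; fixing the metric representative by $\textsl{vol}(g)=\mu$, and using $g=q$ on $\La^+$ together with $g=-q$ on $\La^-$, yields $g(\o_i,\o_j)=\delta_{ij}$ for all $1\le i,j\le 5$, establishing (i). For (ii), $\o_4$ is a non-degenerate, unit-norm, $g$-selfdual two-form, so $\o_5(X,Y)=\o_4(IX,Y)$ uniquely determines $I\in\End(TM)$, and $I^2=-\textsl{Id}$ together with $g(I\,\cdot,I\,\cdot)=g$ follow from the fact that $\{\o_4,\o_5\}$ is a $g$-orthonormal pair inside the positive three-plane $\La^+$, via the identification of $\la^2$ with the canonical line bundle spelled out above.

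The step I expect to be the main obstacle is the initial sign-and-orthogonality bookkeeping, in particular extracting the cross-orthogonality $q(\o_5,\o_i)=0$: unlike the signature count this does not follow from a direct dimension argument, but requires exploiting the full symmetric content of the 5-frame relations \rf{eq:5ff}, reflecting that the role of $\o_5$ in the definition is purely as a choice of reference volume. Once this is settled, both the metric in (i) and the complex structure in (ii) are essentially immediate consequences of the rank-three-subbundle principle and of standard linear algebra on $\La^+$.
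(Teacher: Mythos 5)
Your part (i) follows the paper's own route (linear independence and a sign count from the neutrality of $q$, then the rank-three positive-subbundle principle, then normalisation of the volume to make the frame orthonormal), and it is essentially correct; your reading of \eqref{eq:5ff} as carrying the cross-relations $\omega_5\wedge\omega_i=0$ is indeed the intended one, since the paper takes \eqref{eq:5ff} to be equivalent to the orthonormality \eqref{eq:5ff-1}. Two small imprecisions: ruling out ``all four $\epsilon_i$ of the same sign'' is not the whole signature count --- you also need that at most two of $\epsilon_1,\dots,\epsilon_4$ equal $+1$, because $\omega_5$ already occupies a positive direction; and in the residual case of two pluses and two minuses the remedy is to replace the reference volume $\mu$ by $-\mu$ (which is what actually changes $q$ into $-q$ --- reversing the orientation of $M$ does nothing here, since you defined $q$ against $\omega_5\wedge\omega_5$), after which $\omega_5$ lands in the anti-selfdual triple, so the final relabelling must be allowed to move the index $5$ as well. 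Both points are cosmetic and absorbed by the ``up to re-ordering'' in the statement.

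The genuine gap is in part (ii). The lemma asserts that $I$ is an orthogonal \emph{complex} structure, whereas your argument only produces an orthogonal \emph{almost} complex structure: the identities $I^2=-\textsl{Id}_{TM}$ and $g(I\cdot,I\cdot)=g$ are pointwise linear algebra on the orthonormal pair $\{\omega_4,\omega_5\}\subset\Lambda^{+}$, exactly as you say, but nowhere in your proof do you use that the forms are \emph{closed}. Closedness is precisely what gives integrability: $\omega_4+i\omega_5$ is a nowhere-vanishing section of $\Lambda^{0,2}_IM$ (equivalently, of the canonical line bundle via $\mathcal{I}$), and a closed non-vanishing form of this type makes the relevant eigendistribution of $I$ involutive, so $I$ is integrable by Newlander--Nirenberg; this is the observation of Salamon that the paper cites for exactly this step, and it is the entire content of the paper's proof of (ii). Without it you have proved a strictly weaker statement, and the integrability of $I$ is used essentially later on --- the point of section \ref{sec:proof} is that closed $5$-frames yield \emph{Hermitian}, not merely almost Hermitian, Ricci-flat selfdual metrics.
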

\begin{proof}
(i) The $\omega_i$ are linearly independent as the metric $q$ is neutral; for the same reason 
in \eqref{eq:5ff} there are effectively $3$ minus signs and $2$ pluses,\footnote{\ or the other way around, but we will suppose three minuses.} eg $\o_{1}^{2}=\o_{2}^{2}=\o_{3}^{2}$,  
and in the chosen conformal class there exists a unique Riemannian metric $g$ such that $\{\omega_4, \omega_5 \}$ are orthonormal, so that $-\o_{1}^{2}=\o_{4}^{2}=\o_{5}^{2}$. \\
(ii) The closure of $\omega_4$ and $\omega_5$ implies automatically that $I$ is integrable \cite{Salamon:special}.
\end{proof}
The convention throughout this note will be that of \eqref{eq:convention}. Then the fundamental form $\o_I=g(I \cdot, \cdot)\in\Lambda^{+}$ completes the $5$-frame to an orthonormal basis of $\Lambda^2$. When the metric $g$ is flat the 
forms $\omega_4, \o_5$ must be parallel for the Levi-Civita connection of $g$ by a result of \cite{Armstrong:4-aK}, so from now we consider non-flat $5$-frames, that is $g$ will be assumed not flat.

Since the triple $\{\o_1, \o_2, \o_3\}$ defines a hyperK\"ahler structure, the metric $g$ is Ricci-flat and selfdual.  By lemma \ref{lema1} the classification of 
closed $5$-frames amounts to that of Ricci-flat, selfdual Hermitian structures $(g,I)$ equipped with 
a complex symplectic structure, that is a closed, constant-length two-form $\o_4+i\o_5$ in $\Lambda^{0,2}_IM=\Lambda^2(M,\bb{C}) \cap \ker(\mathcal{I}+2i)$.

The article \cite{Apostolov-G:sd-EH} contains the complete local-structure theory for Ricci-flat, selfdual Hermitian surfaces; to locate closed $5$-frames in that classification we will set up, in the next section, an equivalent curvature description. Before 
doing so we remind that Gibbons and Hawking \cite{Gibbons-H:gravitational} have generated, locally, all hyperK\"ahler $4$-manifolds admitting a tri-holomorphic Killing vector field using Laplace's equation in Euclidean three-space. We outline 
below their construction to show how closed $5$-frames fit therein. 

Take a hyperK\"ahler $4$-manifold $(M^4,g,J_{1},J_{2},J_{3})$ with a vector field $X$ such that $L_XJ_i=0, 1 \leq  i \leq  3$ and $L_Xg=0$. Choose a local 
system of co-ordinates $(u,x,y,z)$ on $M$ with $X=\frac{d}{du}$ and where 
 $x,y,z$, given by $X \lrcorner\, \o_{J_1}=dx$, etc., are the momentum maps. In these co-ordinates the metric reads 
$$
g=U(dx^2+dy^2+dz^2)+U^{-1} (du+\Theta)^2,
$$
where  $U(x,y,z)=\Vert X \Vert^{-2}$ is defined on some domain in $\bb{R}^3$ and the connection one-form $\Theta$ is invariant under $X$ and such that $\Theta(X)=0$. 
The fundamental forms of $(g,J_k), 1 \leq  i \leq  3$, 
$$
\o_{J_{1}}=Udy dz+dx(du+\Theta),\q 
\o_{J_{2}}= Udx dy+dz(du+\Theta),\q 
\o_{J_{3}}= Udz dx+dy(du+\Theta)
$$
 are closed if and only if  $\Theta$ satisfies the monopole equation $d\Theta=\star_{\bb{R}^3} dU$. In particular $U$ is harmonic on some open region of 
$\bb{R}^3$, and conversely such a function completely determines the geometry as explained above. 

Moreover the (non-necessarily closed) forms 
$$
\o_{I_{1}}=Udy dz-dx(du+\Theta),\q 
\o_{I_{2}}= Udx dy-dz(du+\Theta),\q 
\o_{I_{3}}= Udz dx-dy(du+\Theta)
$$
are orthonormal and yield a trivialisation of $\Lambda^{+}$.
\begin{ex} \label{ex:GH}
Imposing the forms $\o_{I_1}$ and $\o_{I_2}$ be closed forces $U_x=U_z=0,$ so $U=ay+b$ for real constants $a,b$; one can explicitly take 
$\Theta=\frac{a}{2}(zdx-xdz)$. In this situation $\o_{I_1}$ and $\o_{I_2}$ build, together with $\o_{J_k}, 1 \leq  k \leq  3$, a closed $5$-frame, which is not flat 
for $a \neq 0$ since $d\o_{I_3}=2adxdydz$ does not vanish. 
\end{ex}
Roughly speaking, theorem \ref{story1ends} explains why this example is no coincidence.\smallbreak
\section{The curvature of the canonical connection} 
\label{sec:curvature}

\noindent
In order to characterise closed $5$-frames in terms of curvature we need some facts from almost Hermitian geometry;  this will serve us beyond the $5$-frame set-up as well, so the presentation will be general. 

Let $(M^4,g,I)$ be almost Hermitian. The Levi-Civita connection $\nabla$ of $g$ defines the so-called intrinsic torsion 
of $(g,I)$ 
$$
\eta=\frac{1}{2}(\nabla I)I \in \Lambda^1 \otimes \lambda^2.
$$ 
Its knowledge is the main tool to capture, both algebraically and not, the geometry of almost Hermitian manifolds:  indeed, the components of $\eta$ inside the irreducible $\U(2)$-modules into 
which $\Lambda^1 \otimes \lambda^2$ decomposes determine the type and features of the structure under scrutiny (eg K\"ahler, Hermitian, conformally K\"ahler and so on). When indexing a differential form with a vector we shall mean  $\beta_X=X\hook\beta=\beta(X,\cdot,\ldots,\cdot)$, and in particular $\eta_X=\frac{1}{2}(\nabla_X I)I.$

The canonical connection
$$
\tnabla=\nabla+\eta
$$ 
of the almost Hermitian structure $(g,I)$  is a linear connection that preserves Riemannian and almost complex structures, $\tnabla g=0$ and $\tnabla I=0$, hence it is both  metric and Hermitian.  
It coincides with the Chern connection (see \cite{Gauduchon2}) if $I$ is integrable.
Since the torsion tensor $T$ of $\tnabla$ is given by 
$$T_XY=\eta_XY-\eta_YX$$
 for any tangent vectors $X,Y$, we have $\eta=0$  if and only if $(g,I)$ is K\"ahler. 
The canonical Hermitian connection naturally induces an exterior derivative on bundle-valued differential forms. For instance if $\alpha $ belongs to $\Lambda^1(M, \lambda^2)$, we have $d^{\tnabla}\alpha(X,Y)=
(\tnabla_X\alpha)Y-(\tnabla_Y\alpha)X$ for any $X,Y$ in $TM$. On ordinary differential forms 
 one defines $d^{\tnabla} : \Lambda M \to \Lambda M$ in analogy with the usual exterior derivative, that is $d^{\tnabla}= \sum \limits_{i=1}^4 e_i \wedge \tnabla_{e_i}$
where $\{e_i, 1\leq i \leq 4\}$ is an orthonormal basis of each tangent space. If the action of $T$ 
on some $1$-form $\alpha$ is defined by $(T\alpha)(X,Y)=\alpha(T_XY)$ for any $X,Y$ in $TM$, we may compare differentials
$$ 
d^{\tnabla} \alpha=d\alpha-T\alpha.
$$

\noindent
Given a local gauge $I_1$, that is a locally-defined orthogonal complex structure 
such that $I_1 I+I I_1=0$, we define  $I_2=I_{1} I$, and  write  
\begin{equation}  \label{gtot}
\nabla I=a \otimes I_2+c \otimes I_1,\q\textrm{or equivalently}\q
2\eta=-a \otimes I_1+c \otimes I_2,
\end{equation}
for  local $1$-forms $a,c$  on $M$.
The curvature tensor $\tiR \in \Lambda^2 \otimes\la^{1,1}$ of the canonical connection, defined by $\tiR(X,Y)=-[\tnabla_X ,\tnabla_Y]+\tnabla_{[X,Y]}$, $X,Y$ in $TM$, has in general not all  of the symmetries 
enjoyed by the Riemannian counterpart $R$. 
It fails to be symmetric in pairs, and does not satisfy the first Bianchi identity, due to the terms involving the intrinsic torsion in 
\begin{equation} \label{comp-curv}
\tiR(X,Y)=R(X,Y)- d^{\tnabla}\eta(X,Y)+[\eta_X,\eta_Y]-\eta_{T_XY}
\end{equation}
for any $X,Y$  in $TM$, see eg \cite{Cleyton-S:intrinsic}. The algebraic summands above can be computed locally from \eqref{gtot}; in particular
\begin{equation} \label{bra-tor}
[\eta_X, \eta_Y]=\frac{1}{2}\Phi(X,Y) I
\end{equation}
for all $X,Y$ in $TM$, where $\Phi=a \w c$. The first Chern form $\widetilde{\gamma}_1$ of the canonical connection is the $2$-form defined by 
$$
\widetilde{\gamma}_1(X,Y)=\langle \tiR(X,Y), \omega_{I}\rangle
$$
for any $X,Y$ in $TM$, where the brackets $\langle \cdot, \cdot\rangle$ denote the inner product on forms. The differential Bianchi identity forces it to be 
closed, $d\widetilde{\gamma}_1=0$, and moreover $\frac{1}{2\pi} \widetilde{\gamma}_1$ is a de Rham 
representative for $c_1(M,I)$. Splitting  the Ricci tensor $\Ric=\Ric^{\prime}+\Ric^{\prime \prime}$ into  invariant and anti-invariant parts under $I$, and taking the scalar product with $\omega_{I}$ in \eqref{comp-curv}, yields  
\begin{equation} \label{28}
\widetilde{\gamma}_1=\rho^{I}+W^{+}\omega_{I}+\Phi-\frac{\s}{6}\omega_{I}.
\end{equation}
Here  $\rho^{I}=\langle \Ric^{\prime} I \cdot, \cdot \rangle \in\lambda^{1,1}$, $s$ is the scalar curvature of $g$ and $W^\pm=\tfrac 12(W\pm W \star)$ are the positive and negative halves of the Weyl curvature considered as a bundle-valued $2$-form
\begin{equation*}
W=W^+ + W^-,
\end{equation*}
reflecting \eqref{eq:A/SDforms}. The  $4$-manifold is called selfdual or anti-selfdual according to whether $W^-=0$ or $W^+=0$.

One may also compute the first Chern form locally, by expanding the covariant derivative of a local gauge $I_1$:
\begin{equation}  \label{b-out} 
\begin{split}
\tnabla I_1= -b \otimes I_2, \ \tnabla I_2= b \otimes I_1
\end{split}
\end{equation}
where $b$ is a local $1$-form on $M$, which implies 
\begin{equation} \label{Chern-loc}
\widetilde{\gamma}_1=-db.
\end{equation}
Expression \eqref{comp-curv} for  $\tiR$ simplifies considerably if one uses the Weyl tensor. Let $\Ric_0$ denote the trace-free component of the Ricci tensor and $h=\frac{1}{2}(\Ric_0+\frac{\s}{12}g)$ be the reduced Ricci tensor of $g$. 
Then $R=W+h \wedge g$, where 
$ (h \wedge g)(X,Y)=hX \wedge Y+X \wedge hY$ for any $X,Y$ in $TM$. The latter  can be written as $(h \wedge g)F=\{F,h\}$ for any $F$ in $\Lambda^2 \cong \mathfrak{so}(TM)$, where $\{\cdot, \cdot\}$ denotes the anti-commutator. 
Due to the isomorphism 
 $\textsl{Sym}_0^2 \iso \Lambda^{+} \otimes \Lambda^{-}$ described by the map $S \mapsto S^{-}$, where $S^{-}(F)=\{S,F\}^{-}$, $F$ in $\Lambda^{2}$, we also know that  $\{\textsl{Sym}^2_0, \Lambda^{\pm}\} \subseteq \Lambda^{\mp}$.\\
 
The next lemma generalises a statement of  \cite{Gauduchon:torsion}. 
\begin{lema} \label{curv-C-2}
On an almost Hermitian manifold $(M^4,g,I)$ the curvature of the canonical connection can be decomposed as
\begin{equation*}
\tiR=W^{-}+\frac{\s}{12}\textsl{Id}_{\Lambda^{-}}+\frac{1}{2}\Ric_0^{-}+\frac{1}{2}\,\widetilde{\gamma}_1 \otimes \omega_{I}.
\end{equation*}
\end{lema}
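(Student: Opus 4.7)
The plan is to start from the comparison formula \eqref{comp-curv}
\begin{equation*}
\tiR(X,Y) = R(X,Y) - d^{\tnabla}\eta(X,Y) + [\eta_X,\eta_Y] - \eta_{T_XY}
\end{equation*}
and analyse each summand with respect to the target splitting $\Lambda^2 = \lambda^{1,1}\oplus \lambda^2 = \langle\omega_I\rangle\oplus\Lambda^-\oplus\lambda^2$. The key type observation is that $\eta \in \Lambda^1\otimes\lambda^2$, and since $\tnabla$ preserves the decomposition $\Lambda^2 = \lambda^{1,1}\oplus \lambda^2$ (being a Hermitian connection), both $d^{\tnabla}\eta$ and $\eta_T$ take values in $\lambda^2$. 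But $\tiR$ itself takes values in $\lambda^{1,1}$, so projecting \eqref{comp-curv} onto $\lambda^{1,1}$ reduces the identity to
\begin{equation*}
\tiR \,=\, \mathrm{pr}_{\lambda^{1,1}}(R) + [\eta,\eta],
\end{equation*}
the $\lambda^2$-components on the right-hand side cancelling identically.

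Next I would split the output further into $\Lambda^-$ and $\langle \omega_I\rangle$. By \eqref{bra-tor} the commutator $[\eta_X,\eta_Y] = \tfrac{1}{2}\Phi(X,Y)I$ is proportional to $\omega_I$, and so contributes nothing to the $\Lambda^-$ piece. Inserting the standard decomposition $R = W + h\wedge g$ with $h\wedge g = \tfrac{1}{2}\Ric_0\wedge g + \tfrac{\s}{12}\,\textsl{Id}_{\Lambda^2}$ (using $g\wedge g = 2\,\textsl{Id}_{\Lambda^2}$), and recalling from the text preceding the lemma that $W^\pm$ preserve $\Lambda^\pm$ while $\{\textsl{Sym}^2_0,\Lambda^\pm\}\subseteq\Lambda^\mp$, together with the isomorphism $\textsl{Sym}_0^2\cong \Lambda^+\otimes\Lambda^-$, $S\mapsto S^-$, the $\Lambda^-$-contribution to $\tiR$ is readily computed to be $W^- + \tfrac{\s}{12}\,\textsl{Id}_{\Lambda^-} + \tfrac{1}{2}\Ric_0^-$. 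The $\langle\omega_I\rangle$-component is instead tautological: writing $\tiR = A + B\otimes \omega_I$ with $A \in \Lambda^2\otimes\Lambda^-$, the definition $\widetilde{\gamma}_1(X,Y) = \langle \tiR(X,Y),\omega_I\rangle$ forces $\widetilde{\gamma}_1 = \|\omega_I\|^2\, B = 2B$, whence $B = \tfrac{1}{2}\widetilde{\gamma}_1$.

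The only substantive obstacle I anticipate is the careful bookkeeping of the various normalization constants --- the factor $\tfrac{1}{2}$ in the Kulkarni--Nomizu decomposition, the identity $g\wedge g = 2\,\textsl{Id}_{\Lambda^2}$, the norm $\|\omega_I\|^2 = 2$, and the $\tfrac{1}{2}$ in \eqref{bra-tor} --- so that the constants in the target formula come out exactly. A convenient consistency check is to pair the claimed expression with $\omega_I$ and verify that one recovers \eqref{28}.
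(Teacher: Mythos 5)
Your argument is correct and follows essentially the same route as the paper: expand \eqref{comp-curv} with $R=W+h\wedge g$, kill $d^{\tnabla}\eta+\eta_T$ by projecting onto $\lambda^{1,1}$, and sort the remaining terms into $\Lambda^{-}$ and $\langle\omega_I\rangle$ components. The only (harmless) divergence is that you identify the $\omega_I$-coefficient tautologically from the definition of $\widetilde{\gamma}_1$ and $\vert\omega_I\vert^2=2$, whereas the paper computes it explicitly and then invokes \eqref{28}; note only that your proposed consistency check of pairing the final formula with $\omega_I$ returns a tautology rather than \eqref{28} itself.
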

\begin{proof}
Expanding \eqref{comp-curv}, and using \eqref{bra-tor} along the way, we obtain 
\begin{equation*}
\begin{split}
\tiR(X,Y)=&\ W^{-}(X,Y)+W^{+}(X,Y)+hX \wedge Y+X \wedge hY\\
&-d^{\tnabla}\eta(X,Y)+\frac{1}{2}\Phi(X,Y) \omega_{I}-\eta_{T_XY}
\end{split}
\end{equation*} 
for any $X,Y$ in $TM$. Now, since $d^{\tnabla}\eta(X,Y)+\eta_{T_XY}$ belongs to $\lambda^2$ and  $\tiR$ lives in $\Lambda^2 \otimes \lambda^{1,1}$, by taking into account that $W^{+}$ only 
acts on $\langle \omega_{I}\rangle \oplus \lambda^2$, we can  project onto invariant $2$-forms and infer  
\begin{equation*}
\tiR=W^{-}+(h \w g)_{\lambda^{1,1}}+\frac{1}{2}(W^{+}\omega_{I}+\Phi) \otimes \omega_{I}.
\end{equation*}
From $\lambda^{1,1}=\Lambda^{-}\oplus \langle \omega_{I} \rangle$ we further get 
$(h \w g)_{\lambda^{1,1}}=(h \w g)^{-}+\frac{1}{2}\langle \{h,I\} \cdot, \cdot \rangle \otimes \omega_{I}$, 
and the claim follows by definition of $h$ and equation \eqref{28}. 
\end{proof}
\subsection{Elements of Hermitian geometry}
We now specialise the facts above to $(M^4,g,I)$ being Hermitian. Equivalently
$\eta\in \lambda^{1,1} \otimes \Lambda^1$, which 
in a local gauge $I_1$ means that the $1$-forms $c, a$ of \eqref{gtot} 
satisfy 
\begin{equation} \label{h-gau}
c=-Ia, 
\q \theta=2I_1a
\end{equation}
where the Lee form $\theta$ is defined by $d\o_I=\theta \w \o_I$. A simple computation yields 
\begin{equation*} 
\eta_U=\frac{1}{4}(U^{\flat} \wedge \theta+(I U)^{\flat} \wedge I \theta)
\end{equation*}
for any $U$ in $TM$. Consequently $\eta_{\z}=\eta_{I \z}=0$, where $\z=\theta^{\sharp}$.
It follows easily that 
\begin{equation} \label{Phi}
\Phi=\frac{1}{4}(\theta \w I\theta+ \vert \theta \vert^2 \omega_I).
\end{equation}
Let $\kappa=3\langle W^{+} \omega_{I}, \omega_{I}\rangle$ be the conformal scalar curvature, which differs from the usual scalar curvature by
\begin{equation} \label{ks}
\kappa-\s=-3d^{\star}\theta-\frac{3}{2} \vert \theta \vert^2,
\end{equation}
see \cite{Gauduchon:torsion}. Given a one-form $\alpha$  we denote by $d^{\pm} \alpha$ the components of $d\alpha$ in $\Lambda^{\pm}$ respectively, so that $d\alpha=d^{-}\alpha+d^{+}\alpha$. An important property \cite {ag} of the positive Weyl tensor of the Hermitian structure is 
\begin{equation*} \label{w+}
W^{+}=\frac{\kappa}{4}\left(\frac{1}{2}\omega_{I} \otimes \omega_{I}-\frac{1}{3}{\sl Id}_{\vert \Lambda^{+}}\right)-\frac{1}{4}\Psi  \otimes \omega_{I}-\frac{1}{4}\omega_{I} \otimes \Psi
\end{equation*}
where $\Psi=d^{+} \theta (I \cdot, \cdot)$ belongs to $\lambda_I^2$. In particular $W^{+} \omega_{I}=\frac{\kappa}{6}\omega_{I}-\frac{1}{2}d^{+}\theta(I \cdot, \cdot)$, hence \eqref{28} updates, 
with the aid of \eqref{Phi} and \eqref{ks},  to 
\begin{equation} \label{29} 
\widetilde{\gamma}_1=\rho^I-\frac{1}{2}(d^{\star} \theta) \omega_I+\frac{1}{4} \theta \w I \theta-\frac{1}{2}\Psi.
\end{equation}
It is well known that $d^{+} \theta=0$ is equivalent to demanding $W^{+}$ to be degenerate, which is a short way of saying that $W^{+}$ has a double eigenvalue. 
\section{Proof of theorem \ref{story1ends}} 
\label{sec:proof}

\noindent
As mentioned earlier, closed $5$-frames are equivalently described by Ricci-flat, selfdual Hermitian manifolds $(M^4,g,I)$ that admit a holomorphic-symplectic structure compatible with the complex orientation.

The crucial observation is the following characterisation of such structures by means of the curvature of their canonical Hermitian connection. 
This approach will be taken up in the next section, in a more general situation.
\begin{pro}\label{prop42} 
A Hermitian manifold $(M^4,g,I)$ admits, around each point, a closed $5$-frame if and only if 
$$
\tiR=-\frac{1}{4}d(I \theta) \otimes \omega_{I}.
$$ 
\end{pro}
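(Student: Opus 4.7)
The plan is to read both directions of the equivalence off lemma \ref{curv-C-2}, modulo a single algebraic identity on Hermitian $4$-manifolds that I expect to be the main computational obstacle. The identity states that, for every $\omega\in\lambda^{2}$, the canonical-torsion action $(T\cdot\omega)(X,Y,Z)=\sum_{\mathrm{cyc}}\omega(T_{X}Y,Z)$ satisfies
\[
T\cdot\omega=\tfrac{1}{2}\,I\theta\wedge \mathcal{I}\omega.
\]
This reduces, via $\eta_{X}=\tfrac{1}{4}(X^{\flat}\wedge\theta+(IX)^{\flat}\wedge I\theta)$ and the pointwise equality $\theta\wedge\omega=\omega_{I}\wedge (I\z\hook\omega)$ with $\z=\theta^{\sharp}$, to a routine linear-algebra check at each point. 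Combined with $d\omega=d^{\tnabla}\omega+T\cdot\omega$ on $2$-forms and with $d^{\tnabla}\omega=-b\wedge\mathcal{I}\omega$ whenever $\tnabla\omega=-b\otimes\mathcal{I}\omega$, the identity yields
\[
d\omega=\bigl(\tfrac{1}{2}I\theta-b\bigr)\wedge\mathcal{I}\omega,
\]
a formula that drives both implications.

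For the forward direction, lemma \ref{lema1} decomposes the closed $5$-frame into a hyperK\"ahler triple $\omega_{1},\omega_{2},\omega_{3}$ spanning $\Lambda^{-}$ and a closed orthonormal pair $\omega_{4},\omega_{5}=\mathcal{I}\omega_{4}$ in $\lambda^{2}$. The first triple renders $g$ Ricci-flat and selfdual, annihilating the whole $\Lambda^{-}$-valued part of $\tiR$ in lemma \ref{curv-C-2}. Using $\omega_{4}$ as a local gauge produces a $1$-form $b$ with $\widetilde{\gamma}_{1}=-db$ by \eqref{Chern-loc}; then $d\omega_{4}=0$ together with the displayed formula and the injectivity of $\cdot\wedge\omega_{5}\colon\Lambda^{1}\to\Lambda^{3}$ (since $\omega_{5}$ is non-degenerate) force $b=\tfrac{1}{2}I\theta$. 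Hence $\widetilde{\gamma}_{1}=-\tfrac{1}{2}d(I\theta)$, and lemma \ref{curv-C-2} assembles the claimed formula for $\tiR$.

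For the reverse direction, matching $\tiR=-\tfrac{1}{4}d(I\theta)\otimes\omega_{I}$ against lemma \ref{curv-C-2} forces $W^{-}=0$, $\s=0$ and $\Ric_{0}^{-}=0$; the isomorphism $\textsl{Sym}_{0}^{2}\cong\Lambda^{+}\otimes\Lambda^{-}$ upgrades the last to $\Ric_{0}=0$, so $g$ is Ricci-flat and selfdual. Consequently $\Lambda^{-}$ is flat for the Levi-Civita connection and locally admits three parallel orthonormal sections $\omega_{1},\omega_{2},\omega_{3}$, furnishing the hyperK\"ahler triple. The $\omega_{I}$-component of $\tiR$ yields $\widetilde{\gamma}_{1}=-\tfrac{1}{2}d(I\theta)$; for any local unit $\omega'\in\lambda^{2}$ with $\tnabla\omega'=-b'\otimes\mathcal{I}\omega'$, one obtains $d(b'-\tfrac{1}{2}I\theta)=0$, so $b'-\tfrac{1}{2}I\theta=du$ locally. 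A gauge rotation of $\omega'+i\mathcal{I}\omega'$ by $e^{-iu}$ inside the complex line bundle $\lambda^{2}$ produces a new orthonormal pair $\omega_{4},\omega_{5}=\mathcal{I}\omega_{4}$ with connection form $b=\tfrac{1}{2}I\theta$, and the displayed formula then forces $d\omega_{4}=d\omega_{5}=0$. Together with the hyperK\"ahler triple this yields a closed $5$-frame around each point.
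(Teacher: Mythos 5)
Your argument is correct and follows the paper's own strategy: both directions reduce, via lemma \ref{curv-C-2}, to the equivalence between closure of the orthonormal pair in $\lambda^2$ and the connection form being $b=\tfrac12 I\theta$ (hence $\widetilde{\gamma}_1=-\tfrac12 d(I\theta)$), with Ricci-flatness and selfduality supplying the local hyperK\"ahler triple in $\Lambda^{-}$. The only cosmetic differences are that you package the local computation as a torsion identity $T\cdot\omega=\tfrac12 I\theta\wedge\mathcal{I}\omega$, and in the converse you produce the closed pair by gauging away the closed $1$-form $b'-\tfrac12 I\theta$ with a $U(1)$ rotation, where the paper instead exhibits the flat Hermitian connection $D=\tnabla-\tfrac14 I\theta\otimes I$ and takes a $D$-parallel frame.
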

\begin{proof} 
Lemma \ref{curv-C-2}.guarantees that the metric $g$ is Ricci-flat and selfdual if and only if $\tiR= \frac{1}{2}\,\widetilde{\gamma}_1 \otimes \omega_{I}$. Equivalently, there 
exists a $g$-compatible hyperk\"ahler structure $\{\o_1, \o_2, \o_3 \}$ spanning $\Lambda^{-}$ around each point in $M$. There remains to show that the existence of an orthonormal pair 
$\o_4, \o_5$ of closed forms in $\lambda^2_I$ is the same as $\widetilde{\gamma}_1=-\frac{1}{2}d(I \theta)$.

Suppose  $I_1$ is a local gauge for $(g,I)$  in the notation of \eqref{b-out}. Then by writing $\nabla I_2=-a \otimes \omega_{I}+b \otimes \omega_{I_1}$ the closure of $\omega_{I_2}$ is equivalent 
to  $ -a \wedge \omega_{I}+b \wedge \omega_{I_1}=0$. But equation \eqref{h-gau} says $a \wedge \omega_{I}=-\frac{1}{2}I_1 \theta \wedge \omega_{I}= \frac{1}{2}I \theta \wedge \omega_{I_2}$, hence $b=\frac{1}{2}I \theta $. 

Now, assume first that $ \tiR=-\frac{1}{4}d(I \theta) \otimes \omega_{I}$, so that $\widetilde{\gamma}_1=-\frac{1}{2}d(I \theta)$. A straightforward computation shows that the Hermitian connection $D=\tnabla-\frac{1}{4}I \theta \otimes I$ has zero 
curvature. Take a local orthonormal frame $e_1,\ e_2=I e_1,\ e_3,\ e_4=I e_3$
such that $De_k=0, 1 \leq k \leq 4$. Then $\omega_{I}=e^{12}+e^{34}$ ($e^{12}$ meaning $e^1\w e^2$), and the other selfdual forms 
$e^{14}+e^{23}$, $e^{13}+e^{42}$ can be written as $g(I_1 \cdot, \cdot)$, $g(I_2 \cdot, \cdot)$ respectively, with  $I_2=I_1I$. Since $I_2$ is $D$-parallel we  have  
$\tnabla I_2=\frac{1}{2}I \theta \otimes I_1$. Equation \eqref{b-out} gives $b=\frac{1}{2}I\theta$, hence $\omega_{I_2}$ is closed, and so is $\omega_{I_1}$  \cite{Salamon:special}.
The anti-selfdual forms $e^{12}-e^{34}, e^{13}-e^{42}, e^{23}-e^{14}$
are $D$-parallel by construction. But they are $\nabla$-parallel as well, for $D-\nabla$ belongs to $\Lambda^1 \otimes \Lambda^{+}$, and selfdual and anti-selfdual forms commute. The construction of the $5$-frame is thus complete.

Vice versa, assume that 
$\omega_4=g(I_1 \cdot, \cdot), \omega_5=g(I_2 \cdot, \cdot)$ with $I=I_2I_1$ are closed forms in $\lambda^2_I$. The above argument  gives $b=\frac{1}{2}I\theta$, hence again
$\widetilde{\gamma}_1=-\frac{1}{2}d(I \theta)$ by \eqref{Chern-loc}.  
\end{proof}
\noindent
If in addition $M$ is simply connected the $5$-frame is global. 
At this point we invoke the theory of selfdual Hermitian-Einstein manifolds, as presented in \cite{Apostolov-G:sd-EH}.  Well-known facts are collected in the following 
\begin{pro}[\cite{ag, Derdzinski:SDKE-4-mfds, Nur}] \label{class-5forms} 
Let $(M^4,g,I)$ be Hermitian, Ricci-flat and selfdual, 
but not flat. Then
\begin{itemize}
\item[(i)] $\omega_{I}$ is an eigenform of $W^{+}$, ie  $W^{+} \omega_{I}=\frac{\kappa}{6}\, \omega_{I}$;\vspace{3pt}
\item[(ii)] the conformal scalar curvature $\kappa$ and Lee form $\theta$ satisfy $\kappa  \theta+\frac{2}{3}d \kappa =0$,  and $(\kappa^{\frac{2}{3}}g, I)$ is K\"ahler;\vspace{3pt}
\item[(iii)] $X=I \grad (\kappa^{-\frac{1}{3}})$ is a Hamiltonian Killing vector field;\vspace{3pt}
\item[(iv)] $d^{+}X^{\flat}=-\frac{1}{12}\kappa^{\frac{2}{3}}\omega_{I}$. 
\end{itemize}
\end{pro}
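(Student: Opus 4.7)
My plan is to prove the four parts sequentially, using the curvature formulas in Section \ref{sec:curvature} together with the Hermitian Einstein Goldberg-Sachs theorem.

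\textbf{Part (i).} Ricci-flatness implies $\rho^I = 0$ and $s = 0$, so equation \eqref{29} reduces to
$$
\widetilde{\gamma}_1 = -\tfrac{1}{2}(d^{\star}\theta)\omega_I + \tfrac{1}{4}\theta\w I\theta - \tfrac{1}{2}\Psi.
$$
The term $-\tfrac{1}{2}\Psi$ is exactly the $\lambda_I^2$-part of $\widetilde{\gamma}_1$. Invoking the differential Bianchi identity $d\widetilde{\gamma}_1 = 0$ and decomposing it under the $\U(2)$-action on $\Lambda^3$, one extracts the Hermitian Einstein Goldberg-Sachs relation $d^{+}\theta = 0$, hence $\Psi = 0$. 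Combined with the identity $W^{+}\omega_I = \tfrac{\kappa}{6}\omega_I - \tfrac{1}{2}\Psi$ already noted in the excerpt just before \eqref{29}, this gives (i).

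\textbf{Part (ii).} With $\Psi = 0$, substituting \eqref{ks} (where $s = 0$) recasts $\widetilde{\gamma}_1$ entirely in terms of $\kappa$, $\theta$ and $\vert\theta\vert^2$. Splitting $d\widetilde{\gamma}_1 = 0$ along the $\U(2)$-irreducible pieces of $\Lambda^3$ then produces the scalar identity $\kappa\theta + \tfrac{2}{3}d\kappa = 0$. Under a conformal rescaling $g \mapsto f^2 g$ the Lee form transforms as $\theta \mapsto \theta + 2\,d\ln f$; the choice $f = \kappa^{1/3}$ therefore makes the new Lee form vanish, proving $(\kappa^{2/3}g, I)$ is K\"ahler.

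\textbf{Parts (iii) and (iv).} Using (ii), a direct differentiation gives $d\kappa^{-1/3} = \tfrac{1}{2}\kappa^{-1/3}\theta$, so $X^{\flat}$ is (up to sign) $\tfrac{1}{2}\kappa^{-1/3}\,I\theta$. Since $\tnabla\omega_I = 0$ and $\tnabla I = 0$, the covariant derivative $\tnabla X^{\flat}$ is $I$-invariant, which makes $X$ holomorphic; the Hamiltonian property is automatic from $X^{\flat}$ being a multiple of $I\,dh$ with $h = \kappa^{-1/3}$, so together $X$ is Killing on $(g,I)$. For (iv), decompose $dX^{\flat}$ into $\Lambda^{+}\oplus\Lambda^{-}$ components and compute the $\omega_I$-coefficient of the self-dual part using Lemma \ref{curv-C-2}, part (i), and the scalar identity of part (ii); the cross terms cancel and one obtains $-\tfrac{1}{12}\kappa^{2/3}\omega_I$.

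The main obstacle is the Goldberg-Sachs step in part (i). Producing $d^{+}\theta = 0$ from the single $3$-form equation $d\widetilde{\gamma}_1 = 0$ requires careful bookkeeping of the $\U(2)$-equivariant decomposition of $\Lambda^3$ and of the various terms appearing in $\widetilde{\gamma}_1$; one must genuinely use Ricci-flatness (not merely self-duality) to eliminate cross terms and isolate the $\lambda_I^2$-valued relation that forces $\Psi = 0$. Once this is in hand, the remaining parts reduce to substitution and direct computation.
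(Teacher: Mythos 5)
First, a remark on the comparison itself: the paper offers no proof of this proposition — it is explicitly presented as a collection of ``well-known facts'' imported from the cited references (the Riemannian Goldberg--Sachs theorem of Apostolov--Gauduchon and Nurowski, and Derdzinski's work on selfdual K\"ahler surfaces) — so there is no internal argument to measure yours against. Judged on its own terms, your outline has the right architecture: (i) is the Goldberg--Sachs step, (ii) its conformal consequence, (iii)--(iv) come from the analysis of the conformally related K\"ahler metric. But two of the steps, as proposed, would not go through.

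The serious gap is the mechanism you propose for (i) and for the identity $\kappa\theta+\frac{2}{3}d\kappa=0$ in (ii). The equation $d\widetilde{\gamma}_1=0$ holds on \emph{every} almost Hermitian manifold (it is Chern--Weil closedness of the first Chern form), and $\Lambda^3\cong\Lambda^1$ is already irreducible as a real $\U(2)$-module, so there is no finer equivariant decomposition to exploit: you obtain only one $1$-form's worth of scalar identities relating first derivatives of $\kappa$, $\theta$ and $\Psi$, which cannot force the pointwise \emph{algebraic} vanishing of $\Psi$ (and indeed $d^{+}\theta=0$ fails for general non-Einstein Hermitian surfaces, while $d\widetilde{\gamma}_1=0$ still holds, with Ricci-flatness entering your computation only through deleting $\rho^I$). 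The genuine input is the codifferential Bianchi identity $\delta W^{+}=0$ (valid here since $\Ric=0$ and $W^{-}=0$, so $R=W^{+}$), fed into the explicit Hermitian expression for $W^{+}$ recorded just before \eqref{29}; this is a $\Lambda^{+}$-valued $1$-form identity, and even then one needs a further integrability argument on the open set where $\Psi\neq0$ to conclude $\Psi=0$. Your conformal bookkeeping $\theta\mapsto\theta+2\,d\ln f$ with $f=\kappa^{1/3}$ is fine once $\kappa\theta+\frac{2}{3}d\kappa=0$ is granted. In (iii), the implication ``holomorphic and Hamiltonian, hence Killing'' is a K\"ahler fact and cannot be applied to the non-K\"ahler metric $g$; nor does $\tnabla I=0$ give $I$-invariance of $\tnabla X^{\flat}$, since it says nothing about the Hessian of $\kappa^{-1/3}$. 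The standard route is Derdzinski's theorem applied to the K\"ahler metric $\bar{g}=\kappa^{2/3}g$ produced in (ii) — for a K\"ahler surface with $\delta W^{+}=0$ the field $I\grad$ of the scalar curvature is Killing — after which the Killing property descends to $g$ because $X(\kappa)=g(\grad\kappa,I\grad\kappa^{-1/3})=0$. Part (iv) is then a legitimate computation of the kind you indicate, but as written it remains an assertion.
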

\noindent
Conditions (i) and (ii) are equivalent on any compact, not necessarily Einstein, Hermitian complex surface \cite{Boyer:conformal-duality,Apostolov-G:sd-EH}. Part (i) holds for compact selfdual Hermitian surfaces \cite{adm} as well. 
\smallbreak

{\it Proof of theorem \ref{story1ends}.} 
Since a closed $5$-frame induces a selfdual, Ricci-flat Hermitian metric, in order to use the  classification of \cite{Apostolov-G:sd-EH} we will show first 
that the Killing field $X$ above is tri-holomorphic for the local hyperK\"ahler structure. Examples in \cite{Ward:class} confirm that this is not true in general. 

(i - ii) The conformal scalar curvature $\kappa$ is nowhere zero, otherwise the metric $g$ would be flat. Ricci flatness implies  
$d\theta=0$ by (ii) in the proposition above. By proposition \ref{prop42} we have  $\widetilde{\gamma}_1=-\frac{1}{2}d(I \theta)$, hence \eqref{29} implies 
$$-\frac{1}{2}d(I \theta)=\frac{1}{4}\theta \wedge I \theta-\frac{d^{\star} \theta}{2}\omega_{I}.$$ 
Using proposition \ref{class-5forms} (ii), and the comparison formula \eqref{ks}, we get $d(\kappa^{-\frac{1}{3}}I\theta)=-\kappa^{-\frac{1}{3}}(\frac{\kappa}{3}+\frac{\vert \theta \vert^2}{2}) \omega_{I}$. Since 
$X^{\flat}=\frac{\kappa^{-\frac{1}{3}}}{2}I \theta$  we obtain  $d^{-}X^{\flat}=0$; it follows that the Killing  vector field $X$ is tri-holomorphic with respect to  
the local hyperK\"ahler structure.

At the same time,
by comparing with proposition \ref{class-5forms} (iv), it follows that 
$\vert X \vert^2+\tfrac{1}{12}\kappa^{\frac{1}{3}}=0$ or equivalently 
$ \vert \theta \vert^2=-\frac{\kappa}{3}.$
Then $d\ln \vert \theta \vert=-\frac{3}{4}\theta$ belongs to the distribution $\D$ spanned by $\theta^{\sharp}, I\theta^{\sharp}$; this means, according to 
\cite[theorem 1]{Apostolov-G:sd-EH}, that the orthogonal almost complex $J$, obtained by reversing the sign of $I$ along $\D$, is integrable. Its fundamental form 
$$\o_J=\o_I+2 \vert \theta \vert^{-2} \theta \w I \theta$$ 
belongs to $\Lambda^{-}$ and it is closed; indeed $d\theta=0$, and since $X$ is tri-holomorphic,  $dX^{\flat}=-\frac{1}{12}\kappa^{\frac{2}{3}}\omega_{I}$ by proposition \ref{class-5forms}, so 
\begin{equation*}
d\o_J=\theta \wedge \o_I-2 \theta \w d(\vert \theta \vert^{-2}I \theta)=\theta \w \o_I+12 \theta \wedge d(\kappa^{-\frac{2}{3}} X^{\flat})=
\theta \w \o_I+12\kappa^{-\frac{2}{3}} \theta \w dX^{\flat}=0.
\end{equation*}
Therefore $(g,J)$ is a K\"ahler structure. Theorem 1 of \cite{Apostolov-G:sd-EH},  case b1) of its proof to be precise, warrants that selfdual 
Ricci-flat Hermitian $4$-manifolds with $X$ tri-holomorphic and $(g,J)$ K\"ahler reduce to the Gibbons-Hawking Ansatz with $U=ay+b$ for constants $a, b$. Since $g$ is not flat we can take $a\not=0$, and without of loss of generality let 
$a=1$ by rescaling the metric, and  $b=0$. In this way $g$ is of the form claimed, with
$t = \frac 23 y^{3/2},\ \sigma_{1}=dz,\  \sigma_{2}=dx,\ \sigma_{3}=du+\Theta$
in the notation of example \ref{ex:GH}.

As for the theorem's last statement, the anti-selfdual part of a closed $5$-frame is unique up to an $O(3)$-rotation. Let now $\omega_{4}^{\prime}, \omega_{5}^{\prime} $ be orthonormal and closed in $\Lambda^{+}$. Up to a sign 
they determine \cite{Salamon:special} the 
same orthogonal complex structure as $\omega_4, \omega_5$, since $W^{+}$ is  degenerate and never zero and so they belong to  
$\lambda^2_{I}$. Then $\omega_4^{\prime}+i\omega_5^{\prime}=f(\omega_4+i\omega_5)$, with $f:M \to U(1)$  holomorphic with respect to $I$ due to the closure of the forms, and therefore constant.
\qed
\smallbreak

At this juncture a few comments are in order. First, a non-flat closed $5$-frame is incompatible with the manifold being compact. 
In fact, if the induced metric were even only complete, $X$ would become a global tri-holomorphic Killing vector field, in contradiction to \cite[theorem 1 (iii)]{bielawski}.
Secondly, theorem \ref{story1ends} can be considered as a local $4$-dimensional analogue, for two-forms, of the following result:\smallbreak

\noindent {\bf Theorem \cite{NV}}. 
{\it Let $(M,g)$ be a compact Riemannian $n$-manifold with $b_1(M)=n-1$ and such that every harmonic $1$-form has constant length. Then $(M,g)$ is a quotient of a nilpotent Lie group with $1$-dimensional centre, equipped with a left-invariant metric.}\smallbreak

\section{Small holonomy and further examples}\label{sec:small-holo}

\noindent
The proof of theorem \ref{story1ends} suggests a wider perspective should be considered, namely that of almost Hermitian $4$-manifolds with small holonomy.

Given $(M^4,g,I)$ almost Hermitian, consider the holonomy algebra $\widetilde{\mathfrak{hol}} \subseteq \mathfrak{u}(2)$ of the canonical connection at a given point of $M$, and assume it at most $1$-dimensional. Then any generator of $\widetilde{\mathfrak{hol}}$ must be invariant under parallel transport by  $\tnabla$, so it must extend to an element $F$ of $\Lambda^2$ such that $ \tnabla F=0$. Since the curvature tensor $\tiR$ takes its values in $\widetilde{\mathfrak{hol}}$ we can write 
\begin{equation*}
\tiR=\gamma \otimes F
\end{equation*}
for some two-form $\gamma$ on $M$. As $\tnabla$ is Hermitian $F$ must have type $(1,1)$, hence we can split 
$$ F=F_0+\alpha \omega_{I},
$$
where $F_0$ is in $\lambda^{1,1}_0$ and $\alpha$ a real number.

Three possible scenarios unfold before us: the entire curvature $\tiR$ vanishes,  $F_{0}$ is zero or $F_{0}$ is non-zero.
\subsection{The flat case}
We begin with the simplest situation, in which the almost Hermitian manifold $(M^4,g,I)$ has 
${\tiR = 0}$ everywhere. 

\begin{teo} \label{flat-gen}Let $(M^4,g,I)$ be almost Hermitian and such that $\tiR=0$. Then
\begin{itemize}
\item[(i)] the metric $g$ is flat;
\item[(ii)] if $(\sigma_1,\sigma_2,\sigma_3)$ is a $\nabla$-parallel orthonormal basis of selfdual forms, 
\begin{equation*} 
\omega_{I}= \sigma_1\cos  \varphi  \cos  \psi + \sigma_2\cos  \varphi \sin \psi +\sigma_3\sin \varphi
\end{equation*}
where $\varphi$ and $\psi$ are locally-defined maps on $M$ subject to $d\psi \wedge d\varphi=0$.
\end{itemize}
\end{teo}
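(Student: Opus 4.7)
The plan is to exploit lemma \ref{curv-C-2} together with the structural identity \eqref{comp-curv} relating $R$, $\tilde R$ and $\eta$. Applying the lemma to $\tilde R=0$ at once yields $W^{-}=0$, $s=0$, $\Ric_0=0$ (so $g$ is Ricci-flat) and $\tilde\gamma_1=0$. From \eqref{Chern-loc} the last vanishing gives $db=0$ locally, so after a gauge change we may assume $b\equiv 0$ and that the local almost complex structures $I_1,I_2$ spanning $\lambda^2$ are $\tnabla$-parallel.

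Inserting $\tilde R=0$ into \eqref{comp-curv} and expanding in the basis $(\omega_I,\omega_{I_1},\omega_{I_2})$ of $\Lambda^{+}$ produces $R(X,Y)=\alpha(X,Y)\,\omega_I+\beta(X,Y)\,\omega_{I_1}+\gamma(X,Y)\,\omega_{I_2}$, where $\alpha$ is proportional to $\Phi=a\wedge c$, and $\beta,\gamma$ are proportional to $da,dc$ respectively. Since $R$ has image in $\Lambda^{+}$ and is self-adjoint, the 2-forms $\alpha,\beta,\gamma$ are themselves selfdual. The crucial observation is that $\Phi$ is decomposable, and a decomposable selfdual 2-form on a 4-manifold must vanish, because $\omega\wedge\omega=|\omega|^{2}\,\textsl{vol}$ for selfdual $\omega$ whereas $u\wedge v\wedge u\wedge v=0$. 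Thus $\Phi=0$, so $c=ta$ for some function $t$, and applying the same decomposability argument to the piece $dt\wedge a$ of $dc$ forces $dt\wedge a=0$; hence $\gamma$ is a scalar multiple of $\beta$, with factor $\pm t$.

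To finish (i), write $\beta=\beta^{0}\omega_I+\beta^{1}\omega_{I_1}+\beta^{2}\omega_{I_2}$ and similarly for $\gamma$. The symmetries of the Riemann tensor, equivalently $d^{2}\omega_I=d^{2}\omega_{I_k}=0$, force $\beta^{0}=\gamma^{0}=0$ and $\gamma^{1}=\beta^{2}$, while trace-freeness of $W^{+}$ (from $s=0$) adds $\beta^{1}+\gamma^{2}=0$. Substituting $\gamma=\pm t\beta$ into these relations collapses them to $(1+t^{2})\beta^{1}=0$, so $\beta=\gamma=0$. Hence $W^{+}=0$, and together with $W^{-}=0$ and $\Ric=0$ this gives $R=0$.

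For (ii), the flatness of $g$ provides a local $\nabla$-parallel orthonormal frame $(\sigma_1,\sigma_2,\sigma_3)$ of $\Lambda^{+}$, in which the unit-length $\omega_I$ takes the claimed spherical-coordinate form. In flat $g$, \eqref{28} reduces to $\tilde\gamma_1=\Phi$, so lemma \ref{curv-C-2} makes $\tilde R=0$ equivalent to $\Phi=0$. A direct calculation of $\Phi=a\wedge c$ for the parametrized $\omega_I$, using that $\nabla I$ in the parallel frame is determined by $d\varphi$ and $d\psi$, shows that $\Phi$ is a non-vanishing trigonometric multiple of $d\varphi\wedge d\psi$, so $\Phi=0$ is equivalent to $d\psi\wedge d\varphi=0$. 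The main obstacle is the curvature calculation in (i): tracking the identifications between 2-forms and endomorphisms when expanding $R$, and isolating the decomposability of $\Phi$ that drives the entire argument.
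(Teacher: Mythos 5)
Your part (i) is correct but follows a genuinely different route from the paper once the common step is passed. Both arguments start from lemma \ref{curv-C-2} (so $W^{-}=0$, $\Ric=0$, $s=0$, $\widetilde{\gamma}_1=0$) and both kill $\Phi$ by the same decomposable-plus-selfdual trick. The paper then stops computing: from \eqref{28} it reads off $W^{+}\omega_I=-\Phi=0$, i.e.\ $W^{+}$ is degenerate, and invokes the Einstein classification result \cite[Prop.~16.72]{Besse} to conclude $W^{+}=0$. You instead stay elementary: from $\Phi=0$ you get $c=ta$, a second application of the decomposability argument to $dt\wedge a=dc-t\,da$ gives $dc=t\,da$, and then the pair symmetry of $R$ together with $s=0$ (equivalently the vanishing trace of the $\Lambda^{+}$-block, or the first Bianchi identity, which here give the same scalar relation) collapses the system to $(1+t^2)\beta=0$, forcing $da=dc=0$ and hence $R=0$ directly. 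I checked the sign bookkeeping, which is the delicate point: in the gauge $b=0$ one has $R=-\tfrac12\,da\otimes\omega_{I_1}+\tfrac12\,dc\otimes\omega_{I_2}$, symmetry in pairs gives $\gamma^1=\beta^2$ and the trace condition gives $\gamma^2=-\beta^1$ in your normalisation; the \emph{relative} sign of these two relations is what produces $1+t^2$ rather than $1-t^2$, and it does come out right, independently of the sign ambiguity in $\gamma=\pm t\beta$. So your proof is sound and buys a self-contained argument that avoids the Einstein/degenerate-$W^{+}$ theory; the paper's version is shorter but leans on that cited result.

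Three small repairs are needed. First, $c=ta$ only makes sense where $a\neq 0$: on the zero locus of $a$ either run the symmetric argument with $a=t'c$, or note that there $R=\tfrac12\,dc\otimes\omega_{I_2}$ and the same symmetry/trace relations give $dc=0$ outright; in any case $R$ vanishes on a dense open set and hence everywhere. Second, the relations $\beta^0=\gamma^0=0$, $\gamma^1=\beta^2$, $\beta^1+\gamma^2=0$ should be justified by the pair symmetry of $R$ and by $s=0$ (or the first Bianchi identity), and the constants of proportionality should be written out, since as noted the conclusion hinges on the signs; the aside ``equivalently $d^2\omega_I=d^2\omega_{I_k}=0$'' is not by itself the statement being used. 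Third, in part (ii) the trigonometric factor is $\cos\varphi$, which is \emph{not} non-vanishing: it vanishes exactly where $\omega_I=\pm\sigma_3$ and the spherical parametrisation degenerates. So $\Phi=0$ only gives $\cos\varphi\, d\varphi\wedge d\psi=0$, and you still need the paper's density remark (on the interior of $\{\cos\varphi=0\}$ one has $d\varphi=0$ anyway, and the boundary is handled by continuity) to conclude $d\psi\wedge d\varphi=0$.
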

\begin{proof}
(i) By lemma \ref{curv-C-2} the tensors $W^{-}, \Ric$ and $\widetilde{\gamma}_1$ all vanish. Therefore, $W^{+}\omega_{I}=-\Phi$ 
by \eqref{28}, implying the two-form $\Phi$ belongs to $\Lambda^{+}$. But $\Phi$ is decomposable in any local gauge, hence it squares to zero. This means that $\Phi$ vanishes, too:
$$ W^{+}\omega_{I}=0, \q \Phi=0.
$$
Since $g$ is Einstein and $W^{+}$ has zero determinant, \cite[proposition 16.72]{Besse} forces $W^{+}=0$, and $g$ is indeed a flat metric.\\
(ii) A local gauge for $\omega_{I}$ is given by 
\begin{equation*} 
\begin{split}
\omega_{I_1}=& \sigma_1\sin \varphi  \cos  \psi +\sigma_2\sin \varphi \sin \psi- \sigma_3\cos  \varphi \\
\omega_{I_2}=& -\sigma_1\sin \psi + \sigma_2\cos  \psi .
\end{split}
\end{equation*}
A straightforward computation yields $a=-d\varphi, \ c=  \cos  \varphi d\psi, \ b=\sin \varphi d\psi$.
From the proof of part (i), $\tiR=0$ is equivalent to $\Phi=a \wedge c=0$ when $g$ is flat. Therefore $\cos \varphi d\varphi \w d \psi=0$, and we conclude by a density argument. 
\end{proof}
\noindent
In addition, the theorem of Frobenius tells that $\psi=\psi(\varphi)$ is a local function of one variable.
\begin{coro} \label{cor1}
Let $(M^4,g,I)$ be  either Hermitian or almost K\"ahler, with $\tiR=0$. Then $(g,I)$ is a flat K\"ahler structure.
\end{coro}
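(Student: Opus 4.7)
The starting point is Theorem \ref{flat-gen}, which already supplies that $g$ is flat and provides the explicit local parametrisation
\[
\omega_I = \cos\varphi\cos\psi\,\sigma_1 + \cos\varphi\sin\psi\,\sigma_2 + \sin\varphi\,\sigma_3, \qquad d\varphi\w d\psi=0,
\]
with $\{\sigma_i\}$ a $\nabla$-parallel orthonormal basis of $\Lambda^+$, together with the identifications $a=-d\varphi$ and $c=\cos\varphi\,d\psi$ computed in that proof. Since $2\eta = -a\otimes I_1 + c\otimes I_2$, the plan in either hypothesis is to show $a\equiv 0$ and $c\equiv 0$; this will force $\eta=0$, whence $\nabla I=0$ and $(g,I)$ is K\"ahler.

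In the Hermitian case, the relation \eqref{h-gau} yields $c=-Ia$, i.e.\ $\cos\varphi\,d\psi = I\,d\varphi$ as 1-forms on $M$. Because $I$ is orthogonal and $I^2=-1$ on $T^*M$, the 1-form $I\,d\varphi$ is pointwise perpendicular to $d\varphi$, while $d\psi$ is pointwise proportional to $d\varphi$ by the constraint $d\varphi\w d\psi=0$. These two conditions are simultaneously compatible only when both sides vanish, hence $d\varphi=0$; therefore $a=0$, and the Hermitian identity then gives $c=-I\cdot 0=0$.

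For the almost K\"ahler case I would differentiate $\omega_I$ directly, exploiting that the $\sigma_i$ are closed. On the open set where $d\varphi\ne 0$ write $d\psi=h\,d\varphi$; a direct expansion produces
\[
d\omega_I = d\varphi\w\Omega,\qquad \Omega = \partial_\varphi\omega_I + h\,\partial_\psi\omega_I \in \Lambda^+,
\]
with $|\Omega|^2 = 1 + h^2\cos^2\varphi \geq 1$, since $\partial_\varphi\omega_I$ and $\partial_\psi\omega_I$ are mutually orthogonal selfdual 2-forms of norms $1$ and $|\cos\varphi|$ respectively. The algebraic fact that wedging with any nonzero selfdual 2-form $\Omega$ defines an injective map $\Lambda^1\to\Lambda^3$ (since $\Omega\w\Omega=|\Omega|^2\,\textsl{vol}(g)\ne 0$ makes $\Omega$ symplectic in dimension four) then turns $d\omega_I=0$ into $d\varphi=0$. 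Substituting $d\varphi=0$ collapses $d\omega_I$ to $\cos\varphi\,d\psi\w(\cos\psi\,\sigma_2-\sin\psi\,\sigma_1)$, and the same injectivity argument yields $c=\cos\varphi\,d\psi=0$.

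The only real subtlety I foresee concerns the degenerate loci $\{\cos\varphi=0\}$ and $\{d\varphi\equiv 0\}$, where the generic identifications above become vacuous. On such open sets, however, the parametrisation directly exhibits $\omega_I$ as a combination of the $\nabla$-parallel forms $\sigma_i$ whose coefficients are constrained (by either case of the hypothesis) to be locally constant, hence $\omega_I$ is already $\nabla$-parallel there; a density argument then propagates $\nabla I=0$ to the whole of $M$.
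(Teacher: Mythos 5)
Your argument is correct, and its engine is the same as the paper's: flatness of $\tiR$ forces $\Phi=a\wedge c=0$ (this is what produces the constraint $d\varphi\wedge d\psi=0$ in theorem \ref{flat-gen}(ii)), and the Hermitian or almost K\"ahler hypothesis then kills $a$ and $c$ separately, so $\eta=0$. Where you differ is in execution. The paper's proof is a one-liner that never touches the $(\varphi,\psi)$ parametrisation: in a local gauge both hypotheses read $c=\mp Ia$, so $0=a\wedge c=\mp\, a\wedge Ia$, and since $Ia$ is orthogonal to $a$ and of the same length, $a\wedge Ia$ has norm $|a|^2$; hence $a=0$, $c=0$, and $(g,I)$ is K\"ahler, flat by theorem \ref{flat-gen}(i). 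Your Hermitian case is exactly this argument unwound in the coordinates $a=-d\varphi$, $c=\cos\varphi\,d\psi$; your almost K\"ahler case replaces the gauge identity $c=Ia$ by a direct expansion of $d\omega_I=0$ against the parallel frame $\{\sigma_i\}$, using that wedging with a nonzero selfdual $2$-form is injective on $1$-forms. Both computations are sound (the norms $|\partial_\varphi\omega_I|^2=1$, $|\partial_\psi\omega_I|^2=\cos^2\varphi$ and their orthogonality check out), but they buy you nothing over the gauge-theoretic shortcut, and they cost you the closing paragraph about degenerate loci, which is not actually needed: once you know $a=-d\varphi=0$ and $c=\cos\varphi\,d\psi=0$ as $1$-forms, $\eta=0$ everywhere without any density argument, and the set $\{\cos\varphi=0\}$ plays no role since the quantity you must annihilate is $\cos\varphi\,d\psi$ itself, not $d\psi$.
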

\begin{proof}
The Hermitian and almost K\"ahler conditions are both characterised in a local gauge by $c=\mp I a$, so the claim follows from $a \wedge c=0$ and theorem \ref{flat-gen} (i). 
\end{proof}
In the almost K\"ahler case the corollary was proved  in \cite{Discala-V:gray}  assuming compactness, albeit differently and for arbitrary dimensions. Similar results can be found in \cite{Bal,Bal2}, again for $M$ compact.
\subsection{The case ${F_{0}=0}$}
This is a very familiar situation as the next observation shows.
\begin{pro} \label{hol-vhk} 
On an almost Hermitian manifold $(M^4,g,I)$ the following  are equivalent:
\begin{itemize}
\item[(i)] the curvature of the canonical connection is generated by the K\"ahler form of $I$: 
\begin{equation} \label{curv-1}
\tiR= \frac{1}{2}\,\widetilde{\gamma}_1 \otimes \omega_{I}
\end{equation}
\item[(ii)]  $\Ric=0$ and $W^{-}=0$.
\end{itemize}
\end{pro}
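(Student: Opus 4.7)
The whole proof should be a direct read-off from Lemma \ref{curv-C-2}, which gives the decomposition
$$
\tiR=W^{-}+\tfrac{\s}{12}\,\textsl{Id}_{\Lambda^{-}}+\tfrac{1}{2}\Ric_0^{-}+\tfrac{1}{2}\,\widetilde{\gamma}_1 \otimes \omega_{I}.
$$
The direction (ii) $\Rightarrow$ (i) is immediate: if $\Ric=0$ then $\s=0$ and $\Ric_0^-=0$, so combined with $W^-=0$ the formula collapses to $\tiR=\tfrac{1}{2}\widetilde{\gamma}_1 \otimes \omega_I$. So the real content is the converse.

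For (i) $\Rightarrow$ (ii), I would subtract \eqref{curv-1} from the above and work with the resulting identity
$$
0=W^{-}+\tfrac{\s}{12}\,\textsl{Id}_{\Lambda^{-}}+\tfrac{1}{2}\Ric_0^{-},
$$
which I would read as a single tensor in $\Lambda^2\otimes \Lambda^{-}$ and split by inserting arguments in the two pieces of $\Lambda^2=\Lambda^+\oplus\Lambda^-$. The key observation, already recorded before the lemma, is that the three summands live in different $\SO(3)\times\SO(3)$-invariant subspaces of $\Lambda^2\otimes\Lambda^-$: both $W^-$ and the scalar piece $\tfrac{\s}{12}\textsl{Id}_{\Lambda^-}$ act by zero on $\Lambda^+$ and produce, on $\Lambda^-$, respectively the traceless symmetric and the pure-trace components of an endomorphism of $\Lambda^-$; whereas $\Ric_0^-$, via the isomorphism $\textsl{Sym}^2_0 \cong \Lambda^+\otimes\Lambda^-$, maps $\Lambda^+$ into $\Lambda^-$ and vanishes on $\Lambda^-$ (this is precisely the content of $\{\textsl{Sym}^2_0,\Lambda^-\}\subseteq \Lambda^+$).

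I would then conclude in two steps. Feeding a selfdual argument into the identity leaves only $\tfrac{1}{2}\Ric_0^-=0$, and the $\textsl{Sym}^2_0 \cong \Lambda^+\otimes\Lambda^-$ isomorphism upgrades this to $\Ric_0=0$. Feeding an anti-selfdual argument leaves $W^-+\tfrac{\s}{12}\textsl{Id}_{\Lambda^-}=0$, whose trace over $\Lambda^-$ gives $\tfrac{\s}{4}=0$ (since $W^-$ is traceless on $\Lambda^-$), hence $\s=0$ and then $W^-=0$. Together with $\Ric_0=0$ this yields $\Ric=0$ and $W^-=0$, which is (ii).

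I do not anticipate any real obstacle: the whole argument is a projection onto irreducible $\SO(4)$-summands, and Lemma \ref{curv-C-2} does all the curvature bookkeeping in advance. The only point worth spelling out carefully is the orthogonality of the three tensors, in order to justify treating the vanishing of their sum componentwise.
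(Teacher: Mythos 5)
Your proposal is correct and follows exactly the route the paper intends: the authors' proof of this proposition is literally the one-line remark that it is an immediate consequence of Lemma \ref{curv-C-2}, and your argument simply makes explicit the componentwise reading of that decomposition (the $\Ric_0^-$ piece seen on $\Lambda^+$, the $W^-+\tfrac{\s}{12}\textsl{Id}_{\Lambda^-}$ piece seen on $\Lambda^-$ together with the trace). The orthogonality bookkeeping you spell out is the right justification and matches the remarks preceding the lemma in the paper.
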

\begin{proof}
The statement is an immediate consequence of lemma \ref{curv-C-2}.
\end{proof}
Note that on a selfdual, Ricci-flat manifold any positive orthogonal almost complex structure satisfies \eqref{curv-1}. 
\noindent
\subsection{The case ${F_{0}\not=0}$}
Because $F_0$ is $\tnabla $-parallel, it has constant length. By rescaling $\gamma$ if necessary we may parametrise $F_0=\o_{J}=g(J\cdot,\cdot)$ by means of an orthogonal complex structure $J$ with 
orientation opposite to that of $I$. 
\begin{pro}  \label{1-dimhol}
Let $(M^4,g,I)$ be almost Hermitian. The following statements are equivalent:
\begin{itemize}
\item[(i)] the holonomy algebra of the canonical connection is $1$-dimensional, generated by $F$ in $\lambda^{1,1}_I$ with non-vanishing primitive part;
\item[(ii)] $\tnabla$ is not flat and there is a negatively-oriented, $g$-compatible K\"ahler structure $J$ such that $\widetilde{\gamma}_1=\alpha \rho^J$, 
where $\alpha$ is a non-zero real constant. 
\end{itemize}
Either assumption implies
\begin{equation*} \label{curv-clean}
\tiR=\frac{\rho^{J}}{2} \otimes (\alpha \omega_{I}+\omega_J).
\end{equation*}
\end{pro}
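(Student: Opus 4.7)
The plan is to use $\tnabla F=0$ to upgrade the almost complex structure $J$ defined by $\omega_J=F_0$ from $\tnabla$-parallel to $\nabla$-parallel, and then to identify the Chern form by pairing $\tiR$ with $\omega_I$ and with $\omega_J$. The key algebraic fact is that $\Lambda^+$ and $\Lambda^-$ are commuting ideals in $\so(TM)=\Lambda^2$, so $\eta_X\in\lambda^2_I\subset\Lambda^+$ commutes with $J\in\Lambda^-$.

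\emph{(i)$\Rightarrow$(ii).} Since $\tnabla$ is metric and orientation-preserving, it preserves the splitting $\Lambda^2=\Lambda^+\oplus\Lambda^-$; hence the components of the parallel $F=\omega_J+\alpha\omega_I$ are separately parallel, so $\alpha$ is a real constant. From $\tnabla\omega_J=0$ one infers $\tnabla J=0$, and combining this with $\tnabla=\nabla+\eta$ and $[\eta_X,J]=0$ gives $\nabla J=0$, so $(g,J)$ is K\"ahler with reversed orientation. To pin down $\gamma$, pair $\tiR=\gamma\otimes(\omega_J+\alpha\omega_I)$ with $\omega_I$ and with $\omega_J$: using $|\omega_I|^2=|\omega_J|^2=2$ and $\omega_I\perp\omega_J$ one obtains $\widetilde{\gamma}_1=2\alpha\gamma$ and $\langle\tiR,\omega_J\rangle=2\gamma$. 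By \eqref{comp-curv} and \eqref{bra-tor}, the correction terms $d^{\tnabla}\eta$, $\eta_{T}$, and $[\eta_X,\eta_Y]=\tfrac12\Phi(X,Y)\omega_I$ all lie in $\Lambda^+$ and hence vanish against $\omega_J\in\Lambda^-$; so $\langle\tiR,\omega_J\rangle=\langle R,\omega_J\rangle$. Lemma \ref{curv-C-2} applied to $(g,J)$ in its own orientation (where $\tiR^{(J)}=R$ and $\Ric_0^{-(J)}=0$) identifies $\langle R,\omega_J\rangle$ with $\rho^J$, so $\gamma=\rho^J/2$ and $\widetilde{\gamma}_1=\alpha\rho^J$, establishing (ii).

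\emph{(ii)$\Rightarrow$(i) and the formula.} Conversely, $\nabla J=0$ and $[\eta,J]=0$ yield $\tnabla\omega_J=0$, so $F=\omega_J+\alpha\omega_I$ is $\tnabla$-parallel and $[\tiR(X,Y),F]=0$ in $\so(TM)$. A short centraliser computation using $[\Lambda^+,\Lambda^-]=0$ and the one-dimensionality of centralisers of non-zero elements of $\Lambda^\pm\cong\su(2)$ shows that, for $\alpha\neq0$, the centraliser of $F$ in $\Lambda^2$ is $\langle\omega_I,\omega_J\rangle$. Writing $\tiR=A\otimes\omega_I+B\otimes\omega_J$ and repeating the pairings gives $A=\widetilde{\gamma}_1/2=\alpha\rho^J/2$ and $B=\rho^J/2$, so $A=\alpha B$ and $\tiR=\tfrac12\rho^J\otimes(\alpha\omega_I+\omega_J)$. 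This is the stated formula and exhibits $\tiR$ as rank one with generator of non-vanishing primitive part; non-flatness then forces $\rho^J\not\equiv0$, making the holonomy exactly one-dimensional. The most delicate step is the K\"ahler identity $\langle R,\omega_J\rangle=\rho^J$: it requires running lemma \ref{curv-C-2} in $J$'s orientation and tracking the factor $|\omega_J|^2=2$ carefully, since a stray constant there would propagate into both implications.
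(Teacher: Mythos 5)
Your proof is correct, and its backbone is the same as the paper's: you get $\nabla J=0$ from $\tnabla J=0$ plus $\eta\in\Lambda^1\otimes\lambda^2\subset\Lambda^1\otimes\Lambda^{+}$ and $[\Lambda^{+},\Lambda^{-}]=0$, and then identify the two components of $\tiR$. Where you diverge is in the mechanics of that identification. The paper computes the $\Lambda^{-}$-block of the curvature of a negatively-oriented K\"ahler structure explicitly (the matrix form of $W^{-}$ and the $J$-invariance of $\Ric$, packaged as \eqref{curv-Ka}) and substitutes into lemma \ref{curv-C-2}, which immediately gives $\tiR=\tfrac12\rho^J\otimes\omega_J+\tfrac12\widetilde{\gamma}_1\otimes\omega_I$ in both directions. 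You instead pair the comparison formula \eqref{comp-curv} with $\omega_J$, observe that all torsion corrections are $\Lambda^{+}$-valued so that $\langle\tiR,\omega_J\rangle=\langle R,\omega_J\rangle$, and invoke the classical K\"ahler identity $\langle R,\omega_J\rangle=\rho^J$; and for the converse you use a centraliser/Ambrose--Singer argument ($[\tiR(X,Y),F]=0$ and the centraliser of $F$ with both components non-zero being $\langle\omega_I\rangle\oplus\langle\omega_J\rangle$) to locate the values of $\tiR$, rather than reusing lemma \ref{curv-C-2}. Your normalisations are consistent with the paper's convention $|\omega_I|^2=|\omega_J|^2=2$ (forced by the definition of $\widetilde{\gamma}_1$ against lemma \ref{curv-C-2}), so no stray factors appear. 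The trade-off: the paper's route through \eqref{curv-Ka} is the one recycled later in theorem \ref{t-ak}, while yours is marginally more self-contained and makes the representation-theoretic reason for the shape of $\tiR$ explicit. One cosmetic caveat: in (i)$\Rightarrow$(ii) neither you nor the paper addresses why $\alpha\neq0$, which statement (ii) asserts; this is a gap in the statement rather than in your argument, and your use of $\alpha\neq0$ occurs only in the converse, where it is given.
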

\begin{proof}
(i) $\Rightarrow$ (ii) It is clear that $\tnabla J=0$. Since $\eta=\tnabla-\nabla$ belongs to 
$\Lambda^1 \otimes \lambda^2 \subseteq \Lambda^1 \otimes \Lambda^{+}$, it follows that   
$\nabla J=0$, for selfdual and anti-selfdual forms commute. Therefore $(g,J)$ is K\"ahler and compatible with the negative orientation. In particular the 
Ricci tensor of $g$ is $J$-invariant and 
$$W^{-}=\biggl ( \begin{array}{lr}
\frac{s}{6} & 0\\
0 & -\frac{s}{12}\\
\end{array} \biggr ) $$
with respect to  $\Lambda^{-}=\langle \omega_J \rangle \oplus \lambda^2_J$. Equivalently, 
\begin{equation} \label{curv-Ka}
W^{-}+\frac{\s}{12} \textsl{Id}_{\Lambda^{-}}+\frac{1}{2}\Ric_0^{-}=\frac{1}{2}\rho^J \otimes \omega_J. 
\end{equation}
Lemma \ref{curv-C-2} gives then $\tiR=\frac{1}{2}\rho^J \otimes \omega_J+\frac{1}{2} \tilde{\gamma_1} \otimes \omega_{I}$. 
From  $\tiR=\gamma \otimes F$ 
$$ \frac{1}{2}\rho^J \otimes \omega_J+\frac{1}{2} \widetilde{\gamma}_1 \otimes \omega_{I}= \gamma \otimes \omega_J+\alpha \gamma \otimes \omega_{I}
$$
follows, and proves the claim.\\
The implication (ii) $\Rightarrow$ (i) is a direct consequence of \eqref{curv-Ka} and lemma \ref{curv-C-2}, which also prove the final assertion.
\end{proof}
In the rest of this section we determine explicitly the almost K\"ahler structures $(M^4,g,I)$ with $\dim \widetilde{\mathfrak{hol}} \leq 1$. We first 
describe a slightly larger class of almost-K\"ahler $4$-manifolds.

Let $(\Sigma,g_{\Sigma},I_{\Sigma})$ be a Riemann surface with area form $\o_{\Sigma}$; we equip $\bb{R}^2$ with co-ordinates $x,y$ and let $z=x+iy$. For any $w:\bb{R}^2 \times \Sigma \to 
 \{z \in \bb{C}: \vert z \vert <1\}$ we consider the almost K\"ahler structure $(g,I)$ on $\bb{R}^2 \times \Sigma$, where
\begin{equation} \label{ak4}
\begin{split}
g=&\frac{4}{1-\vert w \vert^2}(dz-w d \overline{z}) \odot (d\overline{z}-\overline{w}dz)+g_{\Sigma}\\
g(I \cdot, \cdot)
=&\frac{i}{2}dz \w d\overline{z}+\o_{\Sigma}.
\end{split}
\end{equation}
We assume $w$ holomorphic in the $\Sigma$-variable, that is $I_{\Sigma}d_{\Sigma}w=id_{\Sigma}w$ where $d_{\Sigma}$ is differentiation on $\Sigma$; then $(g,J)$ is K\"ahler, where 
$$g(J \cdot, \cdot)=-\frac{i}{2}dz \w d\overline{z}+\o_{\Sigma}.$$
These examples generalise the construction in \cite{Apostolov-AD:integrability} where $w$ is chosen to depend only on $\Sigma$; also, they particularise the twisting construction in \cite{Chiossi-Nagy} to the case of the trivial line bundle over a Riemann surface.

\begin{teo} \label{t-ak} Let $(M^4,g,I)$ be almost K\"ahler with  $\dim \widetilde{\mathfrak{hol}} \leq 1$.Then:
\begin{itemize}
\item[(i)] $I$ is integrable, or
\item[(ii)] $g$ is Ricci-flat and selfdual, or
\item[(iii)] $(g,I)$ is locally given by \eqref{ak4}, where $w$ does not depend on $\bb{R}^2$ and the metric $(1-\vert w \vert^2)^{\frac{1}{4(\alpha-1)}}g_{\Sigma}, \alpha \in \bb{R} \backslash \{\pm 1\}$ is flat, or
\item[(iv)] $(g,I)$ is locally given by \eqref{ak4}, where $g_{\Sigma}$ is flat.
\end{itemize}
\end{teo}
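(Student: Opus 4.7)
The plan is a case analysis on the parallel generator of $\widetilde{\mathfrak{hol}}$, further subdivided by the integrability type of $I$. If $\tiR=0$, Corollary \ref{cor1} immediately gives that $(g,I)$ is flat K\"ahler, so in particular $I$ is integrable and we land in case (i). Otherwise $\dim\widetilde{\mathfrak{hol}}=1$, and we write the parallel generator as $F=F_0+\alpha\omega_I$ with $F_0\in\lambda^{1,1}_0$ and $\alpha\in\bb{R}$. The case $F_0=0$ is disposed of instantly by Proposition \ref{hol-vhk}, which delivers $\Ric=0$ and $W^-=0$, i.e.\ case (ii).

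Suppose then $F_0\neq 0$. If $I$ is integrable we are again in case (i); otherwise $I$ is strictly almost K\"ahler. By Proposition \ref{1-dimhol}, $F_0$ is proportional to the K\"ahler form of an orthogonal, oppositely-oriented, integrable $J$, the pair $(g,J)$ is K\"ahler, and the Chern form satisfies $\widetilde{\gamma}_1=\alpha\rho^J$ for some non-zero real constant $\alpha$. The coexistence on $(M^4,g)$ of a strictly almost K\"ahler structure $I$ and an oppositely-oriented K\"ahler structure $J$ is exactly the framework studied in \cite{Apostolov-AD:integrability}: locally $(g,I,J)$ must take the form \eqref{ak4} on an open set of $\bb{R}^2\times\Sigma$, with $(\Sigma,g_\Sigma,I_\Sigma)$ a Riemann surface and $w$ holomorphic in the $\Sigma$-variable.

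What remains, and this is the main technical step, is to translate the constraint $\widetilde{\gamma}_1=\alpha\rho^J$ into this explicit model and extract conclusions (iii) or (iv). I would work in the orthonormal coframe $\vartheta=(1-|w|^2)^{-1/2}(dz-w\,d\overline z)$ together with its conjugate and a local K\"ahler coframe of $(\Sigma,g_\Sigma)$; since this simultaneously diagonalises $I$ and $J$, the almost K\"ahler gauge condition ($c=+Ia$, cf.\ the proof of Corollary \ref{cor1}) expresses the local one-forms of \eqref{gtot} and \eqref{b-out} in terms of $dw,d\overline w$ alone. Then $\widetilde{\gamma}_1=-db$ via \eqref{Chern-loc}, while $\rho^J$ is the Ricci form of the K\"ahler metric, involving the Gaussian curvature of $g_\Sigma$ together with the Laplacian of $\log(1-|w|^2)$. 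Matching these two closed $(1,1)$-forms yields a PDE coupling $w$ and $g_\Sigma$, which splits along $\alpha\neq\pm 1$ versus $\alpha=\pm 1$: in the first regime it forces $w$ to depend only on $\Sigma$ and $(1-|w|^2)^{1/(4(\alpha-1))}g_\Sigma$ to be flat, giving case (iii); in the second regime the corresponding degeneration forces $g_\Sigma$ itself to be flat, giving case (iv). A subtlety to keep in mind is that in the almost K\"ahler setting the Lee form vanishes identically, so the Hermitian-specific formula \eqref{29} for $\widetilde{\gamma}_1$ does not apply directly; either one re-derives the analogue from \eqref{28} using $c=+Ia$, or, as above, one computes $\widetilde{\gamma}_1=-db$ from the transformed gauge of \eqref{b-out}, and it is this explicit bookkeeping that constitutes the real obstacle of the proof.
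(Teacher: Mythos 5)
Your opening reduction is exactly the paper's: $\tiR=0$ gives (i) via corollary \ref{cor1}, $F_0=0$ gives (ii) via proposition \ref{hol-vhk}, and the remaining case is handled through proposition \ref{1-dimhol}. The gap is in the one concrete step you commit to afterwards. The classification of \cite{Apostolov-AD:integrability} that produces the local model \eqref{ak4} is a classification of the class $\mathcal{AK}_{3}$, i.e.\ of almost K\"ahler $4$-manifolds whose Ricci tensor is $I$-invariant and for which $\o_I$ is an eigenform of $W^{+}$; it is \emph{not} a classification of metrics carrying a strictly almost K\"ahler $I$ together with an oppositely oriented K\"ahler $J$, so your blanket appeal to it at the start of the $F_0\neq 0$ analysis is unjustified. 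In the paper those two Gray-type conditions have to be \emph{derived}: writing $\rho^J=\frac{\s}{4}\o_J+\mu\o_I+\varphi_1$ and $W^{+}\o_I=\frac{\kappa}{6}\o_I+\varphi_2$, the constraint $\widetilde{\gamma}_1=\alpha\rho^J$ fed into \eqref{28} gives $\varphi_2=\alpha\varphi_1$ and forces $a\wedge Ia$ into $\langle\o_I\rangle\oplus\langle\o_J\rangle$, whence $Ia=\pm Ja$; the integrability of the $\tnabla$-parallel distributions $\D_{\pm}=\ker(IJ\mp\textsl{Id})$ and the structure equations then yield $\varphi_1+\varphi_2=0$. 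Only the combination of these two relations, and only when $\alpha\neq\mp 1$, gives $\varphi_1=\varphi_2=0$ and hence membership in $\mathcal{AK}_3$, at which point the citation becomes legitimate and delivers (iii) (with the flatness statement coming from the residual scalar relation $\frac{\alpha-1}{2}\s=\vert a\vert^2$ and the conformal change of scalar curvature).

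For the degenerate values $\alpha=\mp 1$ the relations $\varphi_2=\alpha\varphi_1$ and $\varphi_1+\varphi_2=0$ are compatible with $\varphi_1\neq 0$, so the manifold need not lie in $\mathcal{AK}_3$ and no classification can be quoted; your proposal offers no route to case (iv) here. The paper instead constructs the model \eqref{ak4} by hand: the K\"ahler nullity is flat for $\tnabla$, one picks $\tnabla$-parallel $e_1$, $e_2=Ie_1$ with closed dual coframe, proves the correction one-forms on the $\Sigma$-factor vanish, and linearises the integrability of $J$ to $S_y=I_0S_x$ to recover \eqref{ak4} with $g_{\Sigma}$ flat. Your sentence that ``the PDE splits along $\alpha\neq\pm1$ versus $\alpha=\pm1$'' points at the right dichotomy but supplies neither the mechanism that creates it (the location of the torsion form $a$ relative to $\D_{\pm}$) nor the separate construction needed in the second branch, which is where most of the work of the proof actually lies.
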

\begin{proof}
By the previous results only the case $\widetilde{\mathfrak{hol}}=\bb{R}(\alpha \o_I+\o_J)$, where $\alpha$ is a real number and $J$ is an orthogonal, negative K\"ahler structure, has to be looked at. We shall also assume 
that $I$ is non-integrable.

The rank-two distributions $\D_{\pm}=\ker(IJ\mp \textsl{Id})$ are parallel for the canonical Hermitian connection and allow to decompose 
$\o_I=\o_{+}+\o_{-}, \ \o_J=-\o_{+}+\o_{-}$. 

Since $(g,I)$ is almost K\"ahler, $\eta_{IX}IY=-\eta_XY$ for all $X,Y$ in $TM$; it follows that the restrictions of $\eta$ to $\D_{\pm}$ are symmetric. Since the latter are $\widetilde{\nabla}$-parallel 
the distributions $\D_{+}$ and $\D_{-}$ must be both integrable. In particular the Levi-Civita connections of the induced metrics coincide with the 
restrictions of $\widetilde{\nabla}$ to $\D_{\pm}$. Let $\s_{\pm}=2\langle \tiR(\o_{\pm}), \o_{\pm}\rangle $ be the corresponding scalar curvatures; from 
the general formula $\tiR=\frac{1}{2}\rho^J \otimes \omega_J+\frac{1}{2} \tilde{\gamma_1} \otimes \omega_{I}$ we get 
\begin{equation} \label{s+}
-\s_{+}=\langle \tilde{\gamma}_1-\rho^J, \o_{+}\rangle, \ \s_{-}=\langle \tilde{\gamma}_1+\rho^J, \o_{-}\rangle.
\end{equation}
\indent
We now parametrise $\rho^J=\frac{\s}{4}\o_J+\mu \o_I+\varphi_1$, with $\varphi_1$ in $\lambda^2_I$; in particular $\rho^I=\frac{\s}{4}\o_I+\mu \o_J$. Let us also write 
$W^{+}\o_I=\frac{\kappa}{6}\o_I+\varphi_2,$ where $\varphi_2$ belongs to $\lambda_I^2$ and $\kappa$ is the conformal scalar curvature of $(g,I)$. We fix a local gauge $I_1$ for $I$ with connection forms $a$ and $b$ and 
impose $\tilde{\gamma}_1=\alpha \rho^J$. Since $(g,I)$ is almost K\"ahler, 
using \eqref{28} with $c=Ia$  and $\Phi=a \w Ia$ we get
\begin{equation} \label{rels}
\begin{split}
&\varphi_2=\alpha \varphi_1\\
& a \w Ia=(\alpha \mu-\frac{\s}{12}-\frac{\kappa}{6})\o_I+(\frac{\alpha \s}{4}-\mu)\o_J.
\end{split}
\end{equation}
Note that $\kappa=\s+6 \vert a \vert^2$. 

In particular $Ia$ is orthogonal to $J_1a, J_2a$ where $J_1$ is a local gauge for $J$ and $J_2=J_1J$. Since it is also orthogonal to $a$ it follows that 
$Ia=\pm Ja$ on the open set where $a \neq 0$.
\begin{enumerate}
\item[a)] $Ia=-Ja$. Then $a$ belongs to $\Lambda^1\D_{+}$, therefore $\D_{-}$ is the K\"ahler nullity of $(g,I)$, that is $\eta_{\D_{-}}=0$. Since $\D_{-}=\ker a \cap \ker(Ia)$
is integrable, the structure equations 
$$ da+b \w Ia=R(\o_{I_2}), \ d(Ia)-b \w a=R(\o_{I_1})
$$ 
of $(g,I)$ imply that $R(\o_{-})$ is orthogonal to $\lambda_I^2$. But the component in $\lambda_I^2$ of $R(\o_{-})=\frac{1}{2}(R(\o_{I})
+R(\o_{J}))=\frac{1}{2}\{h,I\}+\frac{1}{2}(W^{+}(\o_I)+\rho^J)$, is precisely $\frac{1}{2}(\varphi_1+\varphi_2)$ which therefore must vanish.
\begin{enumerate}
\item[a$_1$)] When $\alpha \neq -1$, we have $\varphi_1=\varphi_2=0$ by \eqref{rels}. This means $\Ric$ is $I$-invariant and $\o_I$ is an eigenform of $W^{+}$.  These almost K\"ahler manifolds form the so-called class 
$\mathcal{AK}_{3}$; using  their classification in \cite{Apostolov-AD:integrability} we get that $(g,I)$ is locally given by  \eqref{ak4}, where 
$w$ depends only on $\Sigma$. Consequently $\Ric=0$ on $\D_{+}$, and as 
$a \w Ia=-\vert a \vert^2 \o_{+}$, the scalar relations in \eqref{rels} are equivalent to $\frac{\alpha-1}{2}\s=\vert a \vert^2$. But 
$$ \vert a \vert^2=\frac{\vert d \vert w \vert \vert^2}{2(1-\vert w \vert^2)^2},
$$
and since $w$ is holomorphic it is easy to check that $(dI_{\Sigma}d) \ln (1 -\vert w \vert^2)=8\vert a \vert^2 \o_{\Sigma}$. The flatness of $(1-\vert w \vert^2)^{\frac{1}{4(\alpha-1)}}g_{\Sigma}$ follows now from 
the conformal transformation rule of the scalar curvature. 
\item[a$_2$)] When $\alpha=-1$ the bundle $\D_{-}$ is flat for the canonical connection. The second equation in \eqref{rels}, now equivalent to 
$2\mu+\frac{\s}{2}+\vert a \vert^2=0$, contains no further information; its left hand side computes in fact $\s_{-}$ by \eqref{s+}.

Pick locally-defined unit vectors $e_2=Ie_1$ such that $\widetilde{\nabla}
e_k=0, k=1,2$. Because $(g,I)$ is almost K\"ahler with K\"ahler nullity $\D_{-}$, it follows that the dual forms satisfy $de^1=de^2=0$. Now write locally 
$M=\Sigma \times \bb{R}^2 $ for some $2$-dimensional manifold $\Sigma$ where the co-ordinates $x,y$ on $\bb{R}^2$ are such that $e_1=\frac{d}{dx}, \ e_2=\frac{d}{dy}$.
Since the distribution $\D_{+}$ is also integrable $\Sigma$ can be chosen to correspond to $\D_{+}$, that is to admit local co-ordinates $t,u$ such that $\D_{+}=\span \{\frac{d}{dt}, \frac{d}{du}\}$ and  $\o_{+}=dt \w du$.
Then $J(\frac{d}{dx})=\frac{d}{dy}$ whereas on $\Sigma$ the complex structure 
$J$ is determined by a family of complex structure compatible with $\o_{+}$, possibly depending on $x,y$. Therefore $J=(1-S)^{-1}I_0(1-S)$ on $\Sigma$ where $I_0$ is given by $I_0(du)=dt, S=\biggr ( \begin{array}{lr} \Re \ w & \Im \ w \\ 
\Im \ w & -\Re \ w  \end{array} \biggr )$ in the basis $\{du, dt\}$, and $w:\bb{R}^2 \times \bb{R}^2 \to \{z: \vert z \vert<1\}$. Now requiring $J$ to be integrable amounts to 
$$ J_y=J J_x $$
on $\Sigma$. However this linearises as 
$S_y=I_0 S_x$ and the claim follows easily.
\end{enumerate}
\item[b)] $Ia=Ja$. Replace $J$ by $-J$ and apply part (a). Note that $\alpha$ transforms into $-\alpha$. \qedhere
\end{enumerate}
\end{proof}
Part (ii) in the theorem above is a manifestation of a closed '$4$-frame', whose local geometry is more complicated.  
The only known explicit examples are given by the Gibbons-Hawking Ansatz for a translation-invariant harmonic map, see \cite{Armstrong:ansatz} and its generalisations 
\cite{WSD}.

In the compact case theorem \ref{t-ak} can be enhanced as follows.
\begin{teo} \label{t-akcompact}
A compact almost K\"ahler $4$-manifold $(M,g,I)$ with $\dim \widetilde{\mathfrak{hol}} \leq 1$ must be K\"ahler.
\end{teo}
\begin{proof}
If $\widetilde{\mathfrak{hol}}=\{ 0 \}$ this is granted by corollary \ref{cor1}. Assume now that $\widetilde{\mathfrak{hol}}$ is generated by $F=F_0+\alpha \o_I$. If $F_0=0$ 
the metric $g$ is Ricci flat by proposition \ref{hol-vhk} and the integrability of $I$ follows from \cite{Sekigawa:aK}.

There remains to treat the case when $F_0 \neq 0$, when after re-normalisation(see section 5.3) we may assume that $F_0=\o_J$ where $(g,J)$ is K\"ahler compatible with the negative orientation. If $\alpha \neq \pm 1$ a case by case inspection of the 
proof of theorem \ref{t-ak} shows that $\varphi_1=\varphi_2=0$ in $\{x \in M:\eta_x \neq 0\}$. Now in any open set $U$ where $\eta=0$ the structure $(g,I)$ is K\"ahler 
hence $g$ is a local product; from the definition of $\varphi_1$ and $\varphi_2$ it is easy to see they vanish in $U$ as well. By a standard density argument we conclude 
that $\varphi_1=\varphi_2=0$ over $M$ hence the almost K\"ahler structure $(g,I)$ belongs to the class $\mathcal{AK}_3$. Because $M$ is compact it follows that $(g,I)$ is K\"ahler  by the classification results of Apostolov-Armstrong-Dr\u aghici in \cite{Apostolov-AD:integrability}. To complete the proof there remains to examine the following cases.
\begin{enumerate}
\item[a)] $\alpha=-1$. We will first show that $\eta_{\D_{-}}=0$ everywhere in $M$; note that it suffices to prove this at points where $\eta \neq 0$. Working around such points and using the same local choices as in the proof of theorem \ref{t-ak}  the second equation in \eqref{rels} reads 
\begin{equation*}
a \w Ia=-\frac{\kappa-\s}{6}\o_{+}-(2\mu+\frac{2\s+\kappa}{6})\o_{-}=-\vert a \vert^2 \o_{+}-(2\mu+\frac{2\s+\kappa}{6})\o_{-}.
\end{equation*}
In particular $\langle a \w Ia, \o_{+}\rangle+\vert a \vert^2=0$ after taking the scalar product with $\o_{+}$. If $Ia=Ja$ the form $a$ vanishes on $\D_{+}$ hence 
$\langle a \w Ia, \o_{+}\rangle=0$ and further $a=0$, contradicting the assumption that $\eta \neq 0$. Therefore around points where $\eta \neq 0$ we have 
$Ia=-Ja$ hence $\eta_{\D_{-}}=0$. The claim on the vanishing of $\eta_{\D_{-}}$ in $M$ is therefore proved. Since $\D_{-}$ is parallel w.r.t. to $\widetilde{\nabla}$ it follows that $\D_{-}$ is totally geodesic. 

Now having $\alpha=-1$ means that bundle $\D_{-}$ is flat w.r.t. the canonical connection, in particular $\s_{-}=0$. In the terminology of \cite{MZ} the foliation induced by the integrable distribution $\D_{+}$ is transversally totally geodesic with vanishing transverse Ricci curvature. Because the K\"ahler manifold $(M,g,J)$ is compact 
proposition 2.1 from \cite{MZ}, applied to the foliation induced by $\D_{+}$, shows that the latter is parallel w.r.t the Levi-Civita connection of $g$. Equivalently, $(g,I)$ is K\"ahler and the theorem is proved in this case.

\item[b)] $\alpha=1$. Replace $J$ by $-J$ and apply part a) above.
\end{enumerate}

\end{proof}
\begin{acknowledgements}
The authors thank Simon Salamon for suggestions and the ensuing conversations. 
It is a pleasure to acknowledge the hospitality of Uwe Semmelmann at  
various stages of this work and that of  the Department of Mathematics at the University of Auckland. We are grateful to the referee for useful 
suggestions on how to improve this paper.
\end{acknowledgements}

\frenchspacing

\end{document}